\newtheorem{theorem}{Theorem}[section]
\newtheorem{lemma}[theorem]{Lemma}
\newtheorem{proposition}[theorem]{Proposition}
\newtheorem{conjecture}[theorem]{Conjecture}
\newtheorem{remark}[theorem]{Remark}
\title[$\alpha$-continued fraction expansions with odd partial quotients]{Natural extensions and entropy
of $\alpha$-continued fraction expansions with odd partial quotients}
\author{Yusuf Hartono}
\address{Department of Mathematics Education, Sriwijaya University (Unsri), Jalan Raya Palembang-Prabumulih Km 32 Indralaya 30662 , Indonesia}
\email{yhartono@unsri.ac.id}}
\author{Cor Kraaikamp}
\address{Delft University of Technology, EWI (DIAM), Mekelweg 4, 2628 CD Delft, the
Netherlands}
\email{c.kraaikamp@tudelft.nl}}
\author{Niels Langeveld}
\address{Leiden University, Mathematical Institute, PO Box 9512, 2399RA Leiden, the
Netherlands} 
\email{n.d.s.langeveld@math.leidenuniv.nl}
\address{Montan University Leoben, Franz Josef-Stra\upshape{\ss}e 18, 8700 Leoben, Austria} \email{niels.langeveld@unileoben.ac.at}}
\author{Claire Merriman}
\address{The Ohio State University, 231 West 18th Ave, Columbus, Ohio}
\email{merriman.72@osu.edu}}
\subjclass{Primary 28D05, 11K50}
\keywords{odd continued fractions, natural extensions, matching}%
\begin{document}

\begin{abstract}
In~\cite{[BM]}, Boca and the fourth author of this paper introduced a new class of continued fraction expansions \emph{with odd partial quotients}, parameterized by a parameter $\alpha\in [g,G]$, where $g=\tfrac{1}{2}(\sqrt{5}-1)$ and $G=g+1=1/g$ are the two \emph{golden mean numbers}. Using operations called \emph{singularizations} and \emph{insertions} on the partial quotients of the odd continued fraction expansions under consideration, the natural extensions from~\cite{[BM]} are obtained, and it is shown that for each $\alpha,\alpha^*\in [g,G]$ the natural extensions from~\cite{[BM]} are metrically isomorphic. An immediate consequence of this is, that the entropy of all these natural extensions is equal for $\alpha\in [g,G]$, a fact already observed in~\cite{[BM]}. Furthermore, it is shown that this approach can be extended to values of $\alpha$ smaller than $g$, and that for values of $\alpha \in [\tfrac{1}{6}(\sqrt{13}-1), g]$ all natural extensions are still isomorphic. In the final section of this paper further attention is given to the entropy, as function of $\alpha\in [0,G]$. It is shown that in any neighborhood of $0$ we can find intervals on which the entropy is decreasing, intervals on which the entropy is increasing and intervals on which the entropy is constant. In order to prove this we use a phenomena called matching.
\end{abstract}

\maketitle

\newtheorem{Theorem}{Theorem}[section]
\newtheorem{Proposition}{Proposition}[section]
\newtheorem{Lemma}{Lemma}[section]
\newtheorem{Corollary}{Corollary}[section]
\newtheorem{Definition}{Definition}[section]
\newcommand{\R}{\mathbb R} \newcommand{\Z}{\mathbb Z}
\newcommand{\N}{\mathbb{N}} \newcommand{\Q}{\mathbb{Q}}
\newcommand{\ds}{\displaystyle}
\newcommand{\m}{\mathbf}

\section{Introduction}\label{sec:introduction}
\noindent It is well known that every real number $x$ can be written as a finite (in case $x\in\Q$) or infinite (regular)
continued fraction of the form
\begin{equation}\label{eq:RCF}
x=a_0+\frac{\displaystyle 1}{\displaystyle a_1+\frac{1}{a_2+\ddots
+\displaystyle \frac{1}{a_n+\ddots}}} = [a_0;a_1,a_2,\dots ,a_n,\dots ],
\end{equation}
where $a_0\in\Z$ such that $x-a_0\in [0,1)$, and $a_n\in\N$ for $n\geq 1$. Such a \emph{regular continued fraction expansion} (RCF) of $x$ is unique if and only if $x$ is irrational; in case $x\in\Q$ one has two finite expansions of the form~(\ref{eq:RCF}).\smallskip\

Apart from the RCF there are very many continued fraction algorithms, such as Nakada's $\alpha$-expansions from~\cite{[BKS],[N]} and the Ito-Tanaka $\alpha$-expansions from~\cite{[TI]} (both classes generalizing the RCF), the Rosen fractions (generalizing the \emph{nearest integer continued fraction}; see \cite{[KSS],[R]}, and the continued fractions with either \emph{odd} or \emph{even} partial quotients; see~\cite{[S1],[S2],[Rie],[Se1],[Se2],[HK]}.\medskip\

Recently, Boca and the fourth author introduced in~\cite{[BM]} a new class of continued fraction expansions: $\alpha$-continued fraction expansions with odd partial quotients. These $\alpha$-expansions are reminiscent of Nakada's $\alpha$-expansions from~\cite{[N]}, and are studied for a certain range of the parameter (in case of Nakada: $\alpha \in [\tfrac{1}{2},1]$, in case of Boca and the fourth author: $\alpha\in [g,G]$, where $g=(\sqrt{5}-1)/2$ and $G=g+1 = 1/g = (\sqrt{5}+1)/2$ are the two \emph{golden mean} numbers). For both types of $\alpha$-expansions the respective authors show that the underlying dynamical system is ergodic, find its natural extension and obtain the entropy for each $\alpha$ under consideration. For $\alpha = 0.9g$, Boca and the fourth author use a simulation to give an impression of the natural extension of this $\alpha$-expansion, which seems to be quite different from the case where $\alpha \in [g,G]$.\medskip\

In this paper we obtain the results from~\cite{[BM]} in a very different way,  enabling us to obtain also the underlying dynamical system for $\alpha \in [ \frac{ \sqrt{13}-1}{6} ,g)$. In particular, we will see that our construction yields that all natural extensions under consideration are metrically isomorphic, and therefore confirming the result of Boca and the fourth author for $\alpha \in [g,G]$. We would like to remark that for $\alpha \leq 1$ we do not have that all branches are expansive. In~\cite{[S1]} Schweiger proved for $\alpha=1$ that the corresponding system is ergodic, so by using the isomorphism we construct it follows that for all $\alpha\in [ \frac{ \sqrt{13}-1}{6},G]$ the corresponding dynamical system is ergodic. \medskip

In Section~\ref{sec:ent} we turn our attention to entropy as a function of $\alpha$. We obtain a result analogous to a  result of Nakada and Natsui from~\cite{[NN]} which states that in any neighborhood of $0$ we can find intervals on which the entropy is decreasing, intervals on which the entropy is increasing and intervals on which the entropy is constant. In order to prove this we use a phenomena called matching.

\section{Odd continued fraction expansions}\label{sec:oddexpansions}
Let $\alpha\in [g,G]$, and set $I_{\alpha} = [\alpha -2,\alpha )$. Then for $x\in I_{\alpha}$ the $\alpha$-continued fraction map $T_{\alpha}: I_{\alpha}\to I_{\alpha}$ is defined in~\cite{[BM]} as
\begin{equation}\label{eq:alphamap}
T_{\alpha}(x) = \frac{\varepsilon (x)}{x} - d_{\alpha}(x),\quad \text{if $x\in I_{\alpha}\setminus \{ 0 \}$},
\end{equation}
and $T_{\alpha}(0)=0$, where $\varepsilon (x) = \text{sign}(x)$ and
$$
d_{\alpha}(x) = 2\left\lfloor \frac{1}{2|x|} + \frac{1-\alpha}{2}\right\rfloor +1,\quad \text{if $x\in I_{\alpha}\setminus \{ 0 \}$}.
$$
For $x\in I_{\alpha}\setminus \{ 0\}$, the map $T_{\alpha}$ ``generates'' a continued fraction in the following way: if $T_{\alpha}^{n-1}(x) \neq 0$ for $n\geq 1$, set $\varepsilon_n = \text{sign}(T_{\alpha}^{n-1}(x))$ and $a_n = d_{\alpha}(T_{\alpha}^{n-1}(x))$ (note that $T_{\alpha}^{n-1}(x)=0$ for some $n\geq 1$ if and only if $x\in \Q$). Then from~(\ref{eq:alphamap}) it follows that:
\begin{equation}\label{eq:fractionforn=1}
x = \frac{\varepsilon_1}{a_1+T_{\alpha}(x)}.
\end{equation}
Since in general for $n\geq 1$,
\begin{equation}\label{eq:definitionmapattimen}
T_{\alpha}^n(x) = \frac{\varepsilon_n}{T_{\alpha}^{n-1}(x)} - a_n
\end{equation}
we find that
\begin{equation}\label{eq:fractionforn}
T_{\alpha}^{n-1}(x) = \frac{\varepsilon_n}{a_n+T_{\alpha}^n(x)}.
\end{equation}
But then from~(\ref{eq:fractionforn=1}) and~(\ref{eq:fractionforn}) we find that, if $T_{\alpha}^{n-1}(x)\neq 0$,
\begin{equation}\label{eq:alphaexpansionofx}
x = \frac{\varepsilon_1}{a_1+\displaystyle{\frac{\varepsilon_2}{a_2+T_{\alpha}^2(x)}}} = \cdots
= \frac{\varepsilon_1}{a_1+\displaystyle{\frac{\varepsilon_2}{a_2+ \ddots + \displaystyle{\frac{\varepsilon_{n}}{a_{n}+T_{\alpha}^n(x)}}}}}.
\end{equation}
If $x$ is rational, there is an $n\in\N$ such that $T_{\alpha}^n(x)=0$, and the expansion of $x$ in~(\ref{eq:alphaexpansionofx}) is finite. If $x$ is irrational it follows from~(\ref{eq:alphamap}) that $T_{\alpha}^n(x)\neq 0$ for all $n\geq 1$. In this case, deleting $T_{\alpha}^n(x)$ from~(\ref{eq:alphaexpansionofx}) yields a so-called \emph{convergent}:
\begin{equation}\label{eq:convergentofx}
\frac{p_{\alpha ,n}}{q_{\alpha, n}} = \frac{\varepsilon_1}{a_1+\displaystyle{\frac{\varepsilon_2}{a_2+ \ddots + \displaystyle{\frac{\varepsilon_{n}}{a_{n}}}}}},
\end{equation}
where we assume that $p_{\alpha ,n}$ and $q_{\alpha ,n} > 0$ are integers in their lowest terms. Similar to the regular continued fraction (see e.g.~\cite{ [DK],[IK]}) we obtain that
$$
x=\lim_{n\to\infty} \frac{p_{\alpha ,n}}{q_{\alpha, n}}.
$$
In view of this we write
$$
x = \frac{\varepsilon_1}{a_1+\displaystyle{\frac{\varepsilon_2}{a_2+ \ddots + \displaystyle{\frac{\varepsilon_{n}}{a_{n}+\ddots}}}}},
$$
which is denoted as $x = [0;\varepsilon_1/a_1,\varepsilon_2/a_2, \dots , \varepsilon_n/a_n,\dots ]$.\footnote{In Section \ref{sec:ent} we use the notation $x=[0;\varepsilon_1a_1,\varepsilon_2a_2,\dots,\varepsilon_na_n,\dots]$ in order to save space. Since for $i\geq 1$ the partial quotients are odd positive integers the meaning is still straightforward.} In later sections we will use that the $p_{\alpha ,n}$ and $q_{\alpha ,n}$ satisfy\footnote{We will often suppress the $\alpha$ in $p_{\alpha ,n}$ and $q_{\alpha ,n}$.} recurrence relations, given by:
\begin{eqnarray}\label{eq:pnrecurrencerelation}
p_{\alpha ,-1} := 1 & p_{\alpha ,0} := 0 & p_{\alpha ,n} = a_np_{\alpha ,n-1} + \varepsilon_{n}p_{\alpha ,n-2} \label{eq:pnrecurrencerelation} \\
q_{\alpha ,-1} := 0 & q_{\alpha ,0} := 1 & q_{\alpha ,n} = a_nq_{\alpha ,n-1} + \varepsilon_{n}q_{\alpha ,n-2} \label{eq:qnrecurrencerelation} .
\end{eqnarray}

The case $\alpha = 1$, which will be the starting case of our investigations, has previously been studied by Schweiger in~\cite{[S1],[S2]} and Rieger in~\cite{[Rie]}. In particular, Schweiger obtained in~\cite{[S1]} the natural extension of the \emph{continued fraction with odd partial quotients} (oddCF). This natural extension is on $[0,1)\times [-g^2,G]$, and is -- as we will see shortly -- an isomorphic copy of the system found by Boca and the fourth author. It is nice to note, that Schweiger's natural extension has Rieger's \emph{grotesque continued fraction} (GCF) as the inverse of the second coordinate map (with other words, Rieger's GCF are the dual continued fraction expansions of Schweiger's oddCF), which is the case $\alpha = G$ in~\cite{[BM]}. In fact, if we would consider in Schweiger's natural extension the inverse of his map, and exchange the order of the coordinates, we get the system for $\alpha = G$ as obtained in~\cite{[BM]}. Sebe also obtained the natural extension of the GCF; see~\cite{[Se2]}. We will also use this system; see Subsection~\ref{subsec:grotesquesystem}.

\subsection{The continued fraction expansion with odd partial quotients}\label{subsec:odcfs}
The case $\alpha =1$ is the case of Schweiger's \emph{continued fractions with odd partial quotients}. In~\cite{[S2]} Schweiger defines his continued fraction map $T:[0,1)\to [0,1]$ as follows: let
$$
B(+,k) = \left( \frac{1}{2k}, \frac{1}{2k-1}\right] ,\quad \text{for $k=1,2,\dots$},
$$
and
$$
B(-,k) = \left( \frac{1}{2k-1}, \frac{1}{2k-2}\right] ,\quad \text{for $k=2,3,\dots$},
$$
then the map
$$
T(x) = \varepsilon \left( \frac{1}{x} - (2k - 1)\right) ,\quad \text{for $x\in B(\varepsilon , k)$ and $\varepsilon = \pm 1$},
$$
yields the oddCF-expansion of $x$. Schweiger showed that the map $\mathcal{T}: [0,1]\times [-g^2,G]\to [0,1]\times [-g^2,G]$, defined by
$$
\mathcal{T}(x,y) = \left( T(x), \frac{\varepsilon}{a+y}\right) ,
$$
where $\varepsilon = \pm 1$ and $a=2k-1$ are such that $x\in B(\varepsilon ,k)$, is the natural extension map of $T$, and that the system
\begin{equation}\label{eq:naturalextensionOddCF}
( \Omega = [0,1]\times [-g^2,G], \mathcal{B}, \bar{\mu}, \mathcal{T})
\end{equation}
is an ergodic system. Here $\mathcal{B}$ is the collection of Borel sets of $\Omega$, and $\bar{\mu}$ is a $\mathcal{T}$-invariant probability measure on $\Omega$ with density $(3\log G)^{-1}(1+xy)^{-2}$. Now define the map $M:\Omega\to\R^2$ by
$$
M(x,y) = \begin{cases}
(x,y),   & \text{if $y\geq 0$};\\
(-x,-y), & \text{if $y<0$}.
\end{cases}
$$
Now set $\Omega_1=M(\Omega )$, and let $\mathcal{T}_1:\Omega_1\to\Omega_1$ be defined as:
\begin{equation}\label{eq:narturalextensionmapalpha=1}
\mathcal{T}_1(x,y) = M(\mathcal{T}(M^{-1}(x,y))),\quad \text{for $(x,y)\in \Omega_1$}.
\end{equation}
Since $M$ preserves the $\bar{\mu}$ measure, and since
\begin{equation}\label{eq:naturalextensionalpha=1}
\mathcal{T}_1(x,y) = \left( T_1(x), \frac{1}{d_1(x)+\varepsilon (x)y}\right) ,
\end{equation}
we see that the dynamical system
$$
(\Omega_1,\mathcal{B}_1,\bar{\mu}_1, \mathcal{T}_1)
$$
is metrically isomorphic with Schweiger's system $(\Omega, \mathcal{B},\bar{\mu},\mathcal{T})$ from~(\ref{eq:naturalextensionOddCF}). It is exactly this ergodic system which was obtained in~\cite{[BM]} for the case $\alpha =1$. Note that $\mathcal{B}_1$ is the collection of Borel subsets of $\Omega_1$, and that on $\Omega_1$ the $\mathcal{T}_1$-invariant probability measure $\bar{\mu}_1$ has density $(3\log G)^{-1}(1+xy)^{-2}$. For the remainder of this section we will work with this dynamical system. See Figure~\ref{fig:OmegaOmega1} for both planar regions $\Omega$ and $\Omega_1$.

\begin{figure}[ht]
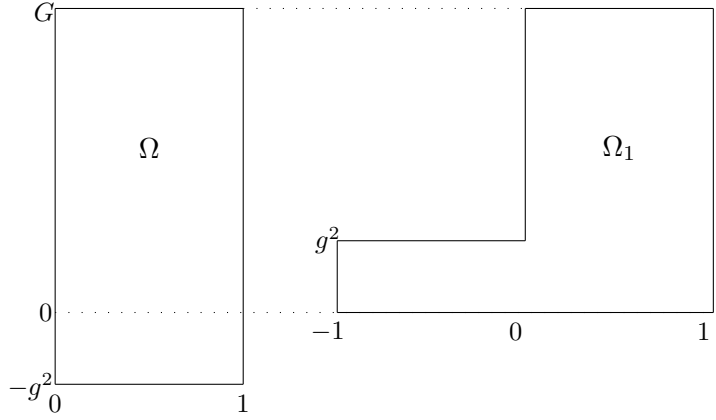

\beginpicture 

\setcoordinatesystem units <0.125cm,0.125cm> 
\setplotarea x from -10 to 60, y from -10 to 40

\put {$-g^2$} at -2.6 -7.9
\put {$G$} at -1.15 32
\put {$0$} at -1 0
\put {$0$} at 0 -9.7
\put {$1$} at 20 -9.7
\put {\large $\Omega$} at 10 17.5
\putrule from 0 -7.63932 to 20 -7.63932
\putrule from 20 -7.63932 to 20 32.36068
\putrule from 0 32.36068 to 20 32.36068
\putrule from 0 -7.63932 to 0 32.36068

\putrule from 30 0 to 70 0
\putrule from 70 0 to 70 32.36068
\putrule from 50 32.36068 to 70 32.36068
\putrule from 50 7.63932 to 50 32.36068
\putrule from 30 7.63932 to 50 7.63932
\putrule from 30 0 to 30 7.63932

\put {$g^2$} at 29 7.7
\put {$-1$} at 29 -2
\put {$0$} at 49 -2
\put {$1$} at 69 -2
\put {\large $\Omega_1$} at 60 17.5

\setdots
\putrule from 0 0 to 70 0
\putrule from 20 32.36068 to 50 32.36068
\endpicture
\caption{The domains $\Omega$ and $\Omega_1$.}
\label{fig:OmegaOmega1}
\end{figure}

Later in this section we will show how the planar natural extensions found by Boca and the fourth author for $\alpha\in [g,1)$ can be obtained from $\Omega_1$ via some simple operations on the partial quotients of any $x\in I_1\setminus \{0\}$ (see Subsection~\ref{subsec:singularizationinsertions1}), while we will deal with the case $\alpha \in (1,G]$ in Subsection~\ref{subsec:grotesquesystem}. In Section~\ref{subsec:smalleralpha} we will find the planar natural extension in case $\alpha\in [ \frac{\sqrt{13}-1 }{6} ,g]$ which is the more difficult case.\medskip\

Although we still need to determine the domain $\Omega_{\alpha}$, the natural extension map $\mathcal{T}_{\alpha}:\Omega_{\alpha}\to\Omega_{\alpha}$ is for each $\alpha$ given by:
\begin{equation}\label{eq:naturalextensionalphageneral}
\mathcal{T}_{\alpha}(x,y) = \left( T_{\alpha}(x), \frac{1}{d_{\alpha}(x)+\varepsilon (x)y}\right) ;
\end{equation}
see~(\ref{eq:naturalextensionalpha=1}) for the case $\alpha =1$. For $x\in I_{\alpha}\setminus \{ 0\}$, and for $n\geq 1$ for which $T_{\alpha}^n(x)\neq 0$ (so this is for all $n\geq 1$ for all irrational $x\in I_{\alpha}$) we set:
$$
(t_n,v_n) = \mathcal{T}^n_{\alpha}(x,0).
$$
Now if the $\alpha$-expansion of $x$ is given by $x=[0;\varepsilon_1/a_1, \varepsilon_2/a_2,\dots, \varepsilon_n/a_n, \varepsilon_{n+1}/a_{n+1},\dots]$, then one can show that
$$
t_n = T_{\alpha}^n(x) = [0; \varepsilon_{n+1}/a_{n+1},\varepsilon_{n+2}/a_{n+2},\dots ]
$$
and using induction and the recurrence relations~(\ref{eq:pnrecurrencerelation}) and~(\ref{eq:qnrecurrencerelation}),
$$
v_n = [0; 1/a_n, \varepsilon_n/a_{n-1},\varepsilon_{n-1}/a_{n-2},\dots , \varepsilon_2/a_1] = \frac{q_{n-1}}{q_n}.
$$
So $t_n$ is the \emph{future} of $x$ at time $n$, and $v_n$ is the \emph{past} of $x$ at time $n$ in the $\alpha$-expansion of $x$. Of course, for different $\alpha$ the future and past of $x$ at time $n$ might be different. For details and proofs, please consult~\cite{[DK],[IK]}.

\subsection{Singularizations and insertions, part 1}\label{subsec:singularizationinsertions1}
In this section the following identities will be frequently used:
\begin{itemize}
\item[ ]
\textbf{Singularization.} Let $A, B\in \Z$ with $A\geq 0$ and $B\geq 1$  and let $\xi> -1$, then:
$$
A + \frac{1}{1 + \displaystyle{\frac{1}{B+\xi}}} = A+1 +\frac{-1}{B+1+\xi} .
$$

\item[ ]

\item[ ]
\textbf{Insertion.}  Let $A, B\in \N$, $B\geq 3$, and let $\xi> -1$, then:
$$
A+\frac{-1}{B+\xi} = A + 1 + \frac{-1}{1 + \displaystyle{\frac{-1}{B+1+\xi}}}.
$$
\end{itemize}
The proofs of these two identities are left to the reader. In Subsection~\ref{subsec:grotesquesystem} and Section~\ref{subsec:smalleralpha} we will see variations of these singularization and insertion identities.\medskip\

Now let $\alpha\in [g,1)$. In the remainder of this section we will show how we can obtain $\Omega_{\alpha}$ from $\Omega_1$, using singularizations and insertions. Our construction will show that all these systems are (metrically) isomorphic, and therefore have -- among other things -- the same entropy. Now let $x\in I_1=[-1,1)$, and suppose that $n\geq 0$ is the first ``time'' that $T_1^n(x)\geq \alpha$ (note that $n=0$ is allowed; $n$ is the first ``time'' that $T_1^n(x)\not\in I_{\alpha} = [\alpha -1,\alpha )$). Since $g\leq \alpha <1$ and since
the time-1 cylinder $\Delta (+1,1)$ for the $T_1$-map is given by
$$
\Delta (+1,1) = \{ x\in I_1\, |\, \varepsilon (x) = +1,\, d_1(x) = 1 \} = \left( \tfrac{1}{2},1\right),
$$
the $T_1$-expansion of $x$ is given by:
\begin{equation}\label{eq:T1expansionofx}
x = \frac{\varepsilon_1}{a_1+\ddots +\displaystyle{\frac{\varepsilon_n}{a_n+ \displaystyle{\frac{1}{1+\displaystyle{\frac{1}{a_{n+2}+\displaystyle{\frac{\varepsilon_{n+3}}{a_{n+3}+\ddots}}}}}}}}} .
\end{equation}
Here $\varepsilon_n=\varepsilon (T_1^{n-1}(x))$ and $a_n=d_1(T_1^{n-1}(x))$, for $n\geq 1$ (note that the expansion of $x$ in~(\ref{eq:T1expansionofx}) is finite if and only if $x\in\Q$). So if $T_1^n(x)\geq \alpha$ we have that $a_{n+1}=1=\varepsilon_{n+1}=\varepsilon_{n+2}$. The expansion from~(\ref{eq:T1expansionofx}) is denoted as
$$
x = [0;\, \varepsilon_1/a_1, \dots, \varepsilon_n/a_n, 1/1, 1/a_{n+2}, \varepsilon_{n+3}/a_{n+3},\dots ].
$$
Now singularizing $a_{n+1}=1$ in~(\ref{eq:T1expansionofx}) yields the following continued fraction expansion of $x$:
\begin{equation}\label{eq:T1expansionofxintemediate}
x = \frac{\varepsilon_1}{a_1+\ddots +\displaystyle{\frac{\varepsilon_n}{a_n+1+ \displaystyle{\frac{-1}{a_{n+2}+1+\displaystyle{\frac{\varepsilon_{n+3}}{a_{n+3}+\ddots}}}}}}} .
\end{equation}
In~(\ref{eq:T1expansionofxintemediate}) two of the partial quotients are \emph{even} (viz.\ $a_n+1$ and $a_{n+2}+1$), so this expansion of $x$ is \textbf{not} a continued fraction expansion of $x$ with (only!) odd partial quotients. Now inserting in~(\ref{eq:T1expansionofxintemediate}) $-1/1$ ``between'' $a_n+1$ and $\frac{-1}{a_{n+2}+1+\dots}$ yields the following continued fraction expansion of $x$ with (only) odd partial quotients:
\begin{equation}\label{eq:T1expansionofxAFTERintemediate}
x = \frac{\varepsilon_1}{a_1+\ddots +\displaystyle{\frac{\varepsilon_n}{a_n+2+\displaystyle{\frac{-1}{1 +  \displaystyle{\frac{-1}{a_{n+2}+2+\displaystyle{\frac{\varepsilon_{n+3}}{a_{n+3}+\ddots}}}}}}}}} ;
\end{equation}
which we denote by $x=[a_0^*; \varepsilon_1^*/a_1^*,\varepsilon_2^*/a_2^*,\dots]$; here $a_0^*=-1$ if $x\in[\alpha,1)$, otherwise it is $0$. Due to our construction we have that $\varepsilon_1^*=\varepsilon_1,\dots,\varepsilon_n^*=\varepsilon_n$, $\varepsilon_{n+1}^* = -1=\varepsilon_{n+2}^*$, and that $\varepsilon_{n+k}^* = \varepsilon_{n+k}$, for $k\geq3$. Similarly, for the partial quotients we have that
$a_1^*=a_1,\dots,a_{n-1}^*=a_{n-1}$, $a_n^*=a_n+2$, $a_{n+1}^*=1$, $a_{n+2}^*=a_{n+2}+2$, and that $a_{n+k}^*=a_{n+k}$, for $k\geq 3$. Setting
$$
t_i^* = \frac{\varepsilon_{i+1}^*}{a_{i+1}^*+\ddots} = [0; \varepsilon_{i+1}^*/a_{i+1}^*,\dots],
$$
and
$$
v_i^* = \frac{1}{a_i^*+\displaystyle{\frac{\varepsilon_i^*}{a_{i-1}^*+ \ddots + \displaystyle\frac{\varepsilon_2^*}{a_1^*}}}} = [0;1/a_i^*,\varepsilon_i^*/a_{i-1}^*,\dots,\varepsilon_2^*/a_1^*],
$$
we see that
$$
(t_i^*,v_i^*) = (t_i,v_i)\quad \text{for $i=1,2,\dots,n-1$ and $i\geq n+2$},
$$
and that $(t_n,v_n)$ and $(t_{n+1},v_{n+1})$ got replaced by $(t_n^*,v_n^*)$ respectively $(t_{n+1}^*,v_{n+1}^*)$.\smallskip\

Since $(t_n,v_n)\in D_{\alpha} = [\alpha ,1)\times [0,G]$ we immediately have that
$$
(t_{n+1},v_{n+1})\in H_{\alpha} := \mathcal{T}_1(D_{\alpha }) = \Big( 0, \frac{1-\alpha}{\alpha}\Big] \times \Big[ g^2,1\Big].
$$
For the new continued fraction expansion~(\ref{eq:T1expansionofxAFTERintemediate}) of $x$ these rectangles $D_{\alpha}$ and $H_{\alpha}$ have been ``vacated''; see also Figure~\ref{fig:Omega1Omegaalpha}.\medskip\

Where do $(t_n^*,v_n^*)$ and $(t_{n+1}^*,v_{n+1}^*)$ ``live''? We see from~(\ref{eq:T1expansionofx}) that the time $n$ future $t_n$ in the $\alpha = 1$-expansion of $x$ is given by
$$
t_n = \frac{1}{1+\displaystyle{\frac{1}{a_{n+2}+\displaystyle{\frac{\varepsilon_{n+3}}{a_{n+3} + \dots }}}}},
$$
while for the ``new'' time $n$ future $t_n^*$ of $x$ in~(\ref{eq:T1expansionofxAFTERintemediate}) we have:
$$
t_n^* = \frac{-1}{1+\displaystyle{\frac{-1}{a_{n+2}+2+\dots}}}.
$$
Since
\begin{eqnarray*}
t_n &=& \frac{1}{1+\displaystyle{\frac{1}{a_{n+2}+ \dots}}} \,\, =\,\, 1 + \frac{-1}{a_{n+2}+1+\dots} \\
&=& 1+1+ \frac{-1}{1+\displaystyle{\frac{-1}{a_{n+2}+1+1+\dots}}} \,\, = \,\, 2+t_n^*,
\end{eqnarray*}
we see that
\begin{equation}\label{eq:tnVStn*}
t_n^* = t_n-2.
\end{equation}
Note that this is something we could expect beforehand from~(\ref{eq:definitionmapattimen}); since $t_n\in [\alpha ,1)$ we must increase the digit $a_n$ by 2 to get that $t_n^*\in I_{\alpha}$.\medskip\

For the time $n$ past $v_n$ in the $\alpha = 1$-expansion of $x$ we have that
$$
v_n = \frac{q_{n-1}}{q_n} = [0;1/a_n, \varepsilon_n/a_{n-1}, \dots , \varepsilon_2/a_1],
$$
while we see from~(\ref{eq:T1expansionofxAFTERintemediate}), and also~(\ref{eq:qnrecurrencerelation}) that the ``new'' past $v_n^*$ satisfies
\begin{eqnarray*}
v_n^* &=& [0; 1/a_n+2,\varepsilon_n/a_{n-1},\dots , \varepsilon_2/a_1] \,\, =\,\, \frac{1}{a_n+2+\varepsilon_n v_{n-1}} \\
&=& \frac{1}{a_n+2 +\varepsilon_n \displaystyle{\frac{q_{n-2}}{q_{n-1}}}} \,\, =\,\, \frac{q_{n-1}}{2q_{n-1} + a_nq_{n-1}+\varepsilon_nq_{n-2}}\\
&=& \frac{q_{n-1}}{2q_{n-1}+q_n} \,\, =\,\, \frac{\frac{q_{n-1}}{q_n}}{2\frac{q_{n-1}}{q_n}+1} \,\, =\,\, \frac{v_n}{2v_n+1}.
\end{eqnarray*}
In view of this and~(\ref{eq:tnVStn*}) we define a map $M_{\alpha}:\Omega_1\to\R^2$ as:
\begin{equation}\label{eq:MalphaMap1}
M_{\alpha}(t,v) = \begin{cases}
(t,v), & \text{if $(t,v)\in\Omega_1\setminus D_{\alpha}$}, \\
 & \\
\left( t-2,\displaystyle{\frac{v}{1+2v}}\right) , & \text{if $(t,v)\in D_{\alpha}$}.
\end{cases}
\end{equation}
Since $\alpha \in [g,1)$, we have that $\frac{1}{\alpha}-1\leq \alpha$, and that equality holds when $\alpha = g$. For $\alpha < g$ something special happens; see Section~\ref{subsec:smalleralpha}.

So we found that
$$
(t_n,v_n)\in D_{\alpha} \quad \text{if and only if} \quad (t_n^*,v_n^*)\in M_{\alpha}(D_{\alpha}) = [\alpha -2,-1)\times \Big[ 0, g^2\Big] .
$$
A calculation yields that the probability measure $\bar{\mu}_1$ on $\Omega_1$ is invariant under the map $M_{\alpha}$.\medskip\

Next we will determine where $(t_{n+1}^*,v_{n+1}^*)$ lives. From~(\ref{eq:T1expansionofxAFTERintemediate}) we see that
$$
t_n^* = \frac{-1}{1+t_{n+1}^*},
$$
implying that
$$
t_{n+1}^* = \frac{-1}{t_n^*} -1.
$$
Now for $t\in [\alpha -2, -1)$ we have that
\begin{equation}\label{eq:tnstar}
T_{\alpha}(t) = \frac{-1}{t}-1,
\end{equation}
so we see that $t_{n+1}^* = T_{\alpha}(t_n^*)$. Furthermore, from~(\ref{eq:T1expansionofxAFTERintemediate}) we see that
$$
v_{n+1}^* = [0; 1/1, -1/a_n+2,\varepsilon_n/a_{n-1},\dots,\varepsilon_2/a_1]
$$
and that
$$
v_n^* = [0; 1/a_n+2,\varepsilon_n/a_{n-1},\dots,\varepsilon_2/a_1],
$$
from which we see that
\begin{equation}\label{eq:vnstar}
v_{n+1}^* = \frac{1}{1-v_n^*}.
\end{equation}
From~(\ref{eq:tnstar}) and~(\ref{eq:vnstar}) we thus find that
$$
(t_{n+1}^*,v_{n+1}^*) = \mathcal{T}_{\alpha}(t_n^*,v_n^*).
$$
Defining $M_{\alpha}: [\alpha -2,-1)\times [0,g^2]\to \R^2$ by
$$
M_{\alpha}(t,v) = \left( \frac{-1}{t}-1,\frac{1}{1-v}\right) ( = \mathcal{T}_{\alpha}((t,v))),
$$
and setting
$$
\Omega_{\alpha} = (\Omega_1\setminus (D_{\alpha}\cup \mathcal{T}_1(D_{\alpha})) \cup M_{\alpha}(D_{\alpha})\cup M_{\alpha}^2(D_{\alpha}),
$$
(cf.~Figure~\ref{fig:Omega1Omegaalpha}), it follows from the fact that for $k\geq 2$ we have that $(t_{n+k}^*,v_{n+k}^*) = (t_{n+k},v_{n+k})$ we must have that
$$
\mathcal{T}_{\alpha}\left( M_{\alpha}^2(D_{\alpha})\right) = \mathcal{T}_{\alpha}\left( H_{\alpha}\right);
$$
the ``new material'' $M_{\alpha}^2(D_{\alpha})$ fills the hole in $\Omega_{\alpha}$ caused by the ``hole'' $H_{\alpha}$. The map $\mathcal{T}_{\alpha}$ is bijective on $\Omega_{\alpha}$, apart from a set of Lebesgue measure 0. We find that the dynamical system
$$
(\Omega_{\alpha},\mathcal{B}_{\alpha},\bar{\mu}_{\alpha}, \mathcal{T}_{\alpha})
$$
is metrically isomorphic to the dynamical system
$$
(\Omega_1,\mathcal{B}_1,\bar{\mu}_1, \mathcal{T}_1)
$$
and therefore inherits all its ergodic properties. In particular, since the map $M_{\alpha}$ preserves the $\bar{\mu}_1$-meaure, we find that on $\Omega_{\alpha}$ the $\mathcal{T}_{\alpha}$-invariant measure $\bar{\mu}_{\alpha}$ has density
$$
\frac{1}{3\log G}\, \frac{1}{(1+xy)^{2}}
$$
on $\Omega_{\alpha}$. Furthermore, since $(\Omega_1,\mathcal{B}_1,\bar{\mu}_1, \mathcal{T}_1)$ is (a version of) the natural extension of $T_1$ (cf.~\cite{[S1],[S2],[Rie]}), we find that $(\Omega_{\alpha},\mathcal{B}_{\alpha},\bar{\mu}_{\alpha}, \mathcal{T}_{\alpha})$ is the natural extension of $T_{\alpha}$ for $g\leq \alpha <1$; this is the system obtained by Boca and the fourth author in~\cite{[BM]}.
\begin{figure}[ht]
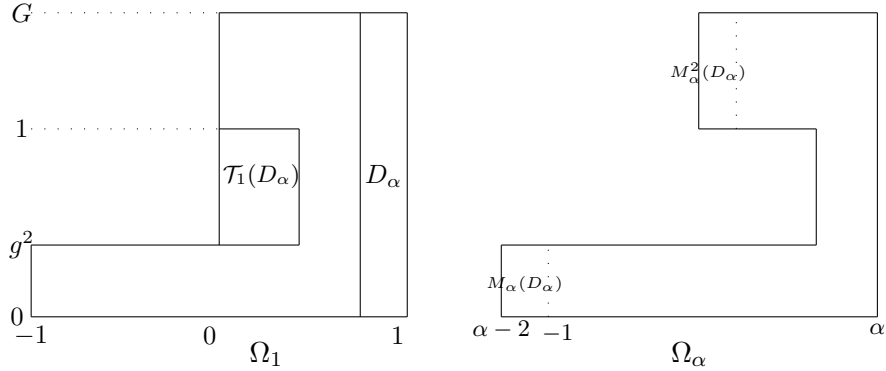

\beginpicture

\setcoordinatesystem units <0.125cm,0.125cm> 
\setplotarea x from -10 to 60, y from -10 to 40

\putrule from 0 0 to 40 0
\putrule from 40 0 to 40 32.36068
\putrule from 20 32.36068 to 40 32.36068
\putrule from 20 7.63932 to 20 32.36068
\putrule from 0 7.63932 to 28.5 7.63932
\putrule from 0 0 to 0 7.63932

\putrule from 35 0 to 35 32.36068

\putrule from 20 20 to 28.5 20
\putrule from 28.5 7.63932 to 28.5 20

\put {$g^2$} at -1 7.7
\put {$-1$} at 0 -2
\put {$0$} at 19 -2
\put {$1$} at 39 -2
\put {\large $\Omega_1$} at 25 -4
\put {$0$} at -1.5 0
\put {$G$} at -1  32.36068
\put {$1$} at -1 20

\put {$D_\alpha$} at 37.5 15
\put {\small $\mathcal{T}_1(D_\alpha)$} at  24.5 15

\putrule from 50 0 to 90 0
\putrule from 71 32.36068 to 90 32.36068
\putrule from 50 7.63932 to 83.5 7.63932
\putrule from 50 0 to 50 7.63932

\putrule from 90 0 to 90 32.36068
\putrule from 71 20 to 71 32.36068
\putrule from 71 20 to 83.5 20
\putrule from 83.5 7.63932 to 83.5 20

\put {\small $\alpha-2$} at 50 -1.3
\put {\small $-1$} at 56 -1.75
\put {\small $\alpha$} at 90 -1.3

\put {\tiny $M_\alpha(D_\alpha)$} at 52.5 3.5
\put {\tiny $M_\alpha^2(D_\alpha)$} at 72 26

\put {\large $\Omega_\alpha$} at 70 -4


\setdots
\putrule from 0 20 to 20 20
\putrule from 0 32.36068 to 20 32.36068
\putrule from 75 20 to 75 32.36068
\putrule from 55 0 to 55 7.63932
\endpicture
\caption{The domains $\Omega_1$ and $\Omega_\alpha$ for $\alpha\in[g,1)$.}
\label{fig:Omega1Omegaalpha}
\end{figure}

\subsection{Singularizations and insertions, part 2}\label{subsec:grotesquesystem}
In order to obtain the results from~\cite{[BM]} for $\alpha \in (1,G]$ we could again start from the Schweiger-Rieger natural extension, and use appropriate singularizations and insertions to find the systems obtained in~\cite{[BM]} for these values of $\alpha$. In view of what we will see in Section~\ref{subsec:smalleralpha} we decided to start with the natural extension of the \emph{grotesque continued fraction expansion}. This dynamical system, which is case $\alpha = G$ in~\cite{[BM]} is already present in the papers by Schweiger (cf.~\cite{[S1],[S2]}) and Rieger (\cite{[Rie]}). So our starting point is the planar natural extension $\Omega_G = [-g^2,G)\times [0,1]$; see Figure~\ref{fig:OmegaG}.

\begin{figure}[ht]
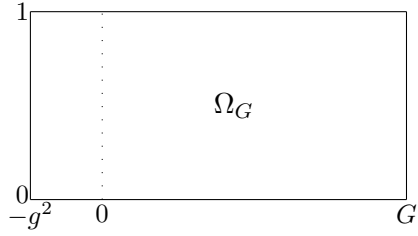

\beginpicture 

\setcoordinatesystem units <0.125cm,0.125cm> 
\setplotarea x from -35 to 30, y from -5 to 25

\putrule from -7.63932 0 to 32.36068 0
\putrule from 32.36068 0 to 32.36068 20
\putrule from -7.63932 20 to 32.36068 20
\putrule from -7.63932 0 to -7.63932 20

\put {$0$} at -8.5 0.5
\put {$1$} at -8.5 20
\put {$-g^2$} at -7.63932 -1.7
\put {$0$} at 0 -1.5
\put {$G$} at 32.36068 -1.5
\put {\large $\Omega_G$} at 14 10

\setdots
\putrule from 0 0 to 0 20

\endpicture
\caption{The domain $\Omega_G$.}
\label{fig:OmegaG}
\end{figure}

In this subsection we will frequently use  singularizations and insertions which are slightly different from the singularization defined in the previous subsection:
\begin{itemize}
\item[ ]

\textbf{Singularization}. Let $A,B\in\Z$, $A\geq 0$, $B\geq 1$ and $\xi > -1$, then:
$$
A + \frac{1}{1+ {\displaystyle \frac{-1}{B+\xi}}} = A+1 + \frac{1}{B-1+\xi}.
$$
\item[ ]

\item[ ]
\textbf{Insertion}. Let $A,B\in\Z$, $A\geq 0$, $B\geq 3$ and $\xi> -1$, then:
$$
A+\frac{1}{B+\xi} = A+1 + \frac{-1}{1 + {\displaystyle {\frac{1}{B-1+\xi}} }}.
$$
\end{itemize}

Now let $1<\alpha <G$, then we want to find the planar natural extension $\Omega_{\alpha}$. Let $x\in I_G$ and let $n\geq 0$ be the first ``time'' we have that $T_G^n(x)\geq \alpha$. So if we again define for $k\geq 0$: $(t_k,v_k)=\mathcal{T}_G^k(x,0)$, then $n$ is the first non-negative integer for which $(t_n,v_n)\in D_{\alpha} = [\alpha ,G)\times [0,1]$. Since $T_G(1) = 0$, we find that $\varepsilon_{n+2} = \text{sign}(T_G^{n+1}(x)) = -1$ and that $a_{n+2}$ is at least equal to 3 (viz.\ $T_G([1,G]) = [-g^2,0]$ and $1/g^2 = G^2 = G+1$). But then the $G$-expansion of $x$ is given by
\begin{equation}\label{eq:GexpansionOfx}
x = [0; \varepsilon_1/a_1, \dots, \varepsilon_n/a_n, 1/1, -1/a_{n+2},\varepsilon_{n+3}/a_{n+3},\dots ].
\end{equation}
As in the previous subsection we want to ``skip'' $D_{\alpha}$; $t_n$ is ``too big''. To achieve this, we singularize $a_{n+1}=1$, to find the following continued fraction expansion of $x$:
\begin{equation}\label{eq:intemediateexpansionOfx}
x = [0; \varepsilon_1/a_1, \dots, \varepsilon_n/a_n+1, 1/a_{n+2}-1,\varepsilon_{n+3}/a_{n+3},\dots ].
\end{equation}
Now both $a_n+1$ and $a_{n+2}-1$ are even, so~(\ref{eq:intemediateexpansionOfx}) is \textbf{not} an odd continued fraction expansion of $x$. We need an insertion of $-1/1$ to arrive at
\begin{equation}\label{eq:targetexpansionOfx}
x = [0; \varepsilon_1/a_1, \dots, \varepsilon_n/a_n+2,-1/1, 1/a_{n+2}-2,\varepsilon_{n+3}/a_{n+3},\dots ].
\end{equation}
If we denote this last expansion~(\ref{eq:targetexpansionOfx}) of $x$ as $x=[a_0^*; \varepsilon_1^*/a_1^*, \dots]$, then we have that $a_0^* = 1$ if $n=0$, and $a_0^*=0$ otherwise, and that $\varepsilon_1^*=\varepsilon_1,\dots,\varepsilon_n^*=\varepsilon_n$, $\varepsilon_{n+1}^* = -1$, $\varepsilon_{n+2}^*=1$, and that $\varepsilon_{n+k}^* = \varepsilon_{n+k}$, for $k\geq3$. Similarly, for the partial quotients we have that
$a_1^*=a_1,\dots,a_{n-1}^*=a_{n-1}$, $a_n^*=a_n+2$, $a_{n+1}^*=1$, $a_{n+2}^*=a_{n+2}-2$, and that $a_{n+k}^*=a_{n+k}$, for $k\geq 3$. As in the previous subsection we can find the relation between  $(t_n,v_n)$ and $(t_n^*,v_n^*)$, and $(t_{n+1},v_{n+1})$ and $(t_{n+1}^*,v_{n+1}^*)$; we skip the details, as these are similar to what we did in the previous subsection, and essentially applying the above steps of first a singularization and then a singularization to $t_n$ and $v_n$. As in the previous subsection we have that
$$
(t_n^*,v_n^*) = \left( t_n-2,\frac{v_n}{1+2v_n}\right) .
$$
In view of this we define for $1< \alpha <G$ a map $M_{\alpha}:\Omega_G\to\R^2$ similar to the definition of $M_{\alpha}$ from~(\ref{eq:MalphaMap1}):
\begin{equation}\label{eq:MalphaMapG}
M_{\alpha}(t,v) = \begin{cases}
(t,v), & \text{if $(t,v)\in\Omega_G\setminus D_{\alpha}$}, \\
 & \\
\left( t-2,\displaystyle{\frac{v}{1+2v}}\right) , & \text{if $(t,v)\in D_{\alpha}$}.
\end{cases}
\end{equation}
Since $G-2 = -g^2$, we that $M_{\alpha}(D_{\alpha}) = [\alpha -2,-g^2) \times [0,\tfrac{1}{3}$]; see also Figure~\ref{fig:OmegaGomegaalpha1}.

\begin{figure}[ht]
\beginpicture 

\setcoordinatesystem units <0.125cm,0.125cm> 
\setplotarea x from -10 to 60, y from -10 to 40

\putrule from -7.63932 0 to 32.36068 0
\putrule from 32.36068 0 to 32.36068 20
\putrule from -7.63932 20 to 32.36068 20
\putrule from -7.63932 0 to -7.63932 20

\putrule from 26 0 to 26 20

\putrule from -7.63932 10 to -4.615385 10
\putrule from -4.615385 10 to -4.615385 20

\put {$0$} at -8.5 0.5
\put {$\frac{1}{2}$} at -8.5 10
\put {$1$} at -8.5 20

\put {$-g^2$} at -7.9 -1.8
\put {$\frac{1-\alpha}{\alpha}$} at -4.615385 -5
\put {$0$} at 0 -1.5
\put {$\alpha$} at 26 -1.5
\put {$G$} at 32.36068 -1.5
\put {$D_\alpha$} at 29 10
\put {\tiny $H_{\alpha}$} at -5.9 15
\put {\large $\Omega_G$} at 14 -4.5

\putrule from 44.36068 0 to 82 0
\putrule from 88.36068 20 to 88.36068 30
\putrule from 51.384615 20 to 64.571429 20
\putrule from 82 20 to 88.36068 20
\putrule from 48.36068 6.66666 to 48.36068 10

\putrule from 44.36068 6.66666 to 48.36068 6.66666
\putrule from 44.36068 0 to 44.36068 6.66666

\putrule from 64.571429 30 to 88.36068 30
\putrule from 64.571429 20 to 64.571429 30

\putrule from 82 0 to 82 20

\putrule from 48.36068 10 to 51.384615 10
\putrule from 51.384615 10 to 51.384615 20

\put {$0$} at 43 0.5
\put {$\frac{1}{3}$} at 43 6
\put {$\frac{1}{2}$} at 43 10.2
\put {$1$} at 43 20
\put {$\frac{3}{2}$} at 43 30

\put {$-g^2$} at 48 -1.8
\put {$\frac{1-\alpha}{\alpha}$} at 51.384615 -5
\put {\tiny $T_\alpha(\alpha-2)$} at 64.571429 -1.5
\put {$\alpha$} at 82 -1.5
\put {$G$} at 88.36068 -1.5
\put {\large $\Omega_{\alpha,1}$} at 70 -4.5
\put {\tiny $M_\alpha(D_{\alpha})$} at 44 3
\put {\tiny $M^2_\alpha(D_{\alpha})$} at 76 25

\setdots
\putrule from 0 0 to 0 20
\putrule from -4.615385 -3 to -4.615385 10

\putrule from 48.36068 0 to 48.36068 6.66666
\putrule from 51.384615 -3 to 51.384615 10
\putrule from 64.571429 0 to 64.571429 20
\putrule from  88.36068 0 to  88.36068 20

\putrule from 64.571429 20 to 82 20

\putrule from 44.36068 10 to 51.384615 10
\putrule from 44.36068 20 to 51.384615 20
\putrule from 44.36068 30 to 64.571429 30

\endpicture
\caption{The domain $\Omega_G$ and the region $\Omega_{\alpha,1}$.}
\label{fig:OmegaGomegaalpha1}
\end{figure}

As in the previous subsection we find that
$$
(t_{n+1}^*,v_{n+1}^*) = \mathcal{T}_{\alpha}(t_n^*,v_n^*),
$$
and therefore we extend the definition of $M_{\alpha}$ to $M_{\alpha}(D_{\alpha})$ by
$$
M_{\alpha}(t,v) = \left( \frac{-1}{t}-1,\frac{1}{1-v}\right) ( = \mathcal{T}_{\alpha}((t,v))),
$$
Note that $M_{\alpha}$ preserves the invariant measure with density $(3\log G)^{-1}(1+xy)^{-2}$ both on $D_{\alpha}$ and on $M_{\alpha}(D_{\alpha})$. We have that
$$
M_{\alpha}^2(D_{\alpha}) = [T_{\alpha}(\alpha -2), G)\times [1,\tfrac{3}{2}] .
$$
Removing $D_{\alpha}$ and $H_{\alpha}=\mathcal{T}_G(D_{\alpha}) = [-g^2,\tfrac{1-\alpha}{\alpha})\times [\tfrac{1}{2},1]$ from $\Omega_G$ we find as a new planar domain,
$$
\Omega_{\alpha ,1} = \left( \Omega_G\setminus (D_{\alpha}\cup H_{\alpha})\right) \cup M_{\alpha}(D_{\alpha})\cup M_{\alpha}^2(D_{\alpha}).
$$
One might be inclined to think that $\Omega_{\alpha ,1}$ is the planar natural extension of $T_{\alpha}$ $\dots$ but this is not correct. 
Note that $\Omega_{\alpha ,1}$ contains the rectangle
$D_{\alpha ,1} = [\alpha ,G)\times [1,\tfrac{3}{2})$. Extending the definition of $M_{\alpha}$ in an obvious way to $D_{\alpha ,1}$ and $M_{\alpha}(D_{\alpha,1})$ we see that
$$
M_{\alpha}(D_{\alpha ,1}) = [\alpha -2, -g^2)\times [\tfrac{1}{3},\tfrac{3}{8}]\quad \text{and}\quad
M_{\alpha}^2(D_{\alpha ,1}) = [T_{\alpha}(\alpha -2), G)\times [\tfrac{3}{2},\tfrac{8}{5}];
$$

Removing $D_{\alpha,1}$ and $H_{\alpha,2}=\mathcal{T}_G(D_{\alpha,1}) = [-g^2,\tfrac{1-\alpha}{\alpha})\times [\tfrac{2}{5},\tfrac{1}{2}]$ from $\Omega_{\alpha,1}$ we find as a new planar domain
$$
\Omega_{\alpha ,2} = \left( \Omega_{\alpha,1}\setminus (D_{\alpha,1}\cup H_{\alpha,1})\right) \cup M_{\alpha}(D_{\alpha,1})\cup M_{\alpha}^2(D_{\alpha,1}),
$$
see Figure~\ref{fig:Omegaalpha1omegaalpha2}.
\begin{figure}[ht]
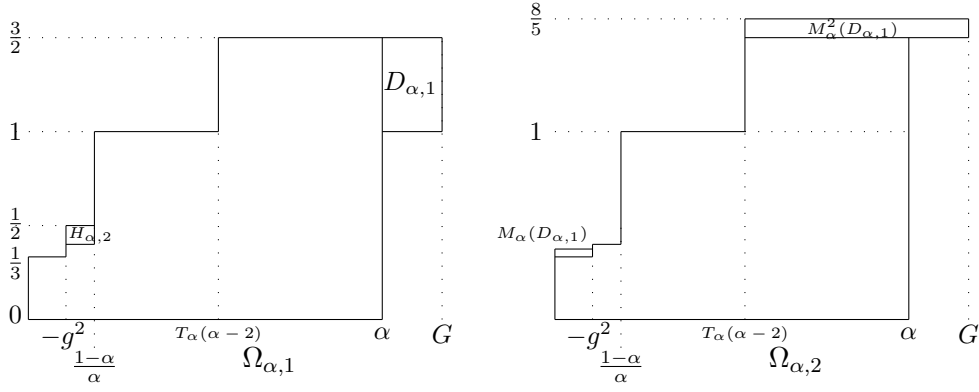

\beginpicture 

\setcoordinatesystem units <0.125cm,0.125cm> 
\setplotarea x from -10 to 60, y from -10 to 40

\putrule from -11.63932 0 to 26 0
\putrule from 32.36068 20 to 32.36068 30
\putrule from -4.615385 20 to 8.571429 20
\putrule from 26 20 to 32.36068 20
\putrule from -7.63932 6.66666 to -7.63932 10

\putrule from -11.63932 6.66666 to -7.63932 6.66666
\putrule from -11.63932 0 to -11.63932 6.66666

\putrule from 8.571429 30 to 32.36068 30
\putrule from 8.571429 20 to 8.571429 30

\putrule from 26 0 to 26 30

\putrule from -7.63932 10 to -4.615385 10
\putrule from -4.615385 8 to -4.615385 20
\putrule from -7.63932 8 to -4.615385 8

\put {\tiny $H_{\alpha,2}$} at -5 9
\put {$D_{\alpha,1}$} at 29 25

\put {$0$} at -13 0.5
\put {$\frac{1}{3}$} at -13 6
\put {$\frac{1}{2}$} at -13 10.2
\put {$1$} at -13 20
\put {$\frac{3}{2}$} at -13 30

\put {$-g^2$} at -7.9 -1.7
\put {$\frac{1-\alpha}{\alpha}$} at -4.615385 -5
\put {\tiny $T_\alpha(\alpha-2)$} at 8.571429 -1.5
\put {$\alpha$} at 26 -1.5
\put {$G$} at 32.36068 -1.5
\put {\large $\Omega_{\alpha,1}$} at 14 -4.5

\putrule from 44.36068 0 to 82 0
\putrule from 51.384615 20 to 64.571429 20

\putrule from 48.36068 6.66666 to 48.36068 8

\putrule from 44.36068 6.66666 to 48.36068 6.66666
\putrule from 44.36068 0 to 44.36068 7.5
\putrule from 44.36068 7.5 to 48.36068 7.5

\putrule from 64.571429 30 to 88.36068 30
\putrule from 64.571429 20 to 64.571429 32

\putrule from 82 0 to 82 30 

\putrule from 48.36068 8 to 51.384615 8
\putrule from 51.384615 8 to 51.384615 20

\putrule from 88.36068 30 to 88.36068 32
\putrule from 64.571429 32 to 88.36068 32

\put {$-g^2$} at 48 -1.7
\put {$\frac{1-\alpha}{\alpha}$} at 51.384615 -5
\put {\tiny $T_\alpha(\alpha-2)$} at 64.571429 -1.5
\put {$\alpha$} at 82 -1.5
\put {$G$} at 88.36068 -1.5
\put {\large $\Omega_{\alpha,2}$} at 70 -4.5

\put {\tiny $M_\alpha(D_{\alpha,1})$} at 43 9
\put {\tiny $M^2_\alpha(D_{\alpha,1})$} at 76 30.9

\put {$1$} at 42.36068 20
\put {$\frac{8}{5}$} at 42.36068 32

\setdots
\putrule from 48.36068 0 to 48.36068 6.66666
\putrule from 51.384615 -3 to 51.384615 10
\putrule from 64.571429 0 to 64.571429 20
\putrule from  88.36068 0 to  88.36068 30

\putrule from -7.63932 0 to -7.63932 6.66666
\putrule from -4.615385 -3 to -4.615385 10
\putrule from 8.571429 0 to 8.571429 20
\putrule from  32.36068 0 to  32.36068 30

\putrule from 64.571429 20 to 82 20

\putrule from 44.36068 20 to 51.384615 20
\putrule from 44.36068 32 to 64.571429 32

\putrule from -13 10 to -7.63932 10
\putrule from -13 20 to -4.615385 20
\putrule from -13 30 to 8.571429 30

\endpicture
\caption{The regions $\Omega_{\alpha,1}$ and $\Omega_{\alpha,2}$.}
\label{fig:Omegaalpha1omegaalpha2}
\end{figure}

We find that $\Omega_{\alpha,2}$ is also not the planar natural extension of $T_{\alpha}$; the domain $\Omega_{\alpha ,2}$ contains the rectangle
$D_{\alpha ,2} = [\alpha ,G)\times [\tfrac{3}{2},\frac{8}{5})$, which has a first coordinate which is ``too large''. If we now repeat the above procedure we find a sequence of rectangles $D_{\alpha,2}, D_{\alpha,3}, \dots,D_{\alpha,n},\dots$ such that if $D_{\alpha, 0}=D_{\alpha}=[\alpha , G) \times [0,1]$, and for $n\geq 0$,
$$
D_{\alpha,n} = [\alpha , G) \times [c_{2n-2},c_{2n}],
$$
where $(c_n)_{n\geq 0}$ is the sequence of regular continued fraction expansion convergents of $G$.

To see why this is indeed the case, we define the map $M_{\alpha}:  [ \alpha ,G)\times [0,G] \cup [ \alpha -2,-g^2)\times [0,g) \to \R^2$ as
$$
M_{\alpha}(t,v) = \begin{cases}
\left( t-2, \frac{v}{1+2v}\right), & \text{if $(t,v)\in [ \alpha,G)\times [0,G]$}; \\
 & \\
\left( \frac{-1}{t}-1, \frac{1}{1-v}\right) , & \text{if $(t,v)\in [ \alpha -2,-g^2)\times [0,g]$}.
\end{cases}
$$
So for $t=G$ and $0\leq v\leq G$ we find that
$$
M_{\alpha}^2(t,v) = \left( G, m(v))\right) ,
$$
where the second coordinate map $m(v)$ satisfies $m(0)=1$ and for $0<v\leq G$:
$$
m(v) = \frac{1}{1-\displaystyle{\frac{v}{1+2v}}} = \frac{2v+1}{v+1} = 1 + \frac{v}{1+v} = 1+\frac{1}{1+\displaystyle{\frac{1}{v}}}.
$$
But then we have for $0<v\leq G$:
$$
m^2(v) = 1+\frac{1}{1+\displaystyle{\frac{1}{\displaystyle{1+\frac{1}{1+\displaystyle{\frac{1}{v}}}}}}}.
$$
Setting $m_0=0$, $m_1=m(0)=1$, $m_2=m^2(0)=m(1)=\tfrac{3}{2}$, $\dots$, $m_n=m^n(0)=m^{n-1}(1)$, for $n\geq 1$, we see that
\begin{equation}\label{eq:c_n}
m_n = [1; \underbrace{1,1,\dots,1}_{2(n-1)\,\,\text{times}}] \uparrow G,\quad \text{if $n\to\infty$}.
\end{equation}
We denote the regular continued fraction convergent of $G$ in~(\ref{eq:c_n}) by $[1; 1^{2n-2}]$. Setting
$$
D_{\alpha,n} = [\alpha,G)\times [m_n,m_{n+1}],\quad \text{for $n\geq 0$}
$$
then by construction we have that
$$
M_{\alpha}^2(D_{\alpha,n}) =  [T_{\alpha}(\alpha -2),G)\times [m_{n+1},m_{n+2}],\quad \text{for $n\geq 0$},
$$
which contains $D_{\alpha,n+1}$ since $T_{\alpha}(\alpha -2) \leq \alpha$. Furthermore, we see that at each time-step $n$ we add as a ``new'' region:
$$
M_{\alpha}(D_{\alpha,n}) = [\alpha -2,-g^2)\times \left[ \frac{m_n}{1+2m_n},\frac{m_{n+1}}{1+2m_{n+1}}\right].
$$
Note that for $0\leq v\leq G$ we have that:
$$
\frac{v}{1+2v} = \frac{1}{2+\displaystyle{\frac{1}{v}}},
$$
and therefore for $n\geq 1$ we have that
$$
\frac{m_n}{1+2m_n} = [0;2, 1^{2n-1}] \uparrow g^2, \quad \text{as $n\to\infty$}.
$$
Note that the natural extension map $\mathcal{T}_G$ is not defined on $\bigcup_{n=1}^{\infty} D_{\alpha,n}= [\alpha ,G)\times(1,G]$. We extend the
definition of on the set in the obvious way:
$$
\mathcal{T}_G(t,v) = \left( \frac{1}{t} -1 , \frac{1}{1+v}\right) ,\quad \text{for $(t,v)\in [\alpha ,G)\times(1,G]$}.
$$
At every ``time'' $n$ in our construction we add $M_{\alpha}(D_{\alpha,n})$ and $M_{\alpha}^2(D_{\alpha,n})$ to $\Omega_{\alpha,n}$, and remove from $\Omega_{\alpha,n}$ the rectangles $D_{\alpha,n}$ and $\mathcal{T}_G(D_{\alpha,n})$. Note that this last set is for $n\geq 0$ equal to:
$$
\mathcal{T}_G(D_{\alpha,n}) = \Big( -g^2, \frac{1}{\alpha}-1\Big] \times \left[ \frac{1}{1+m_{n+1}}, \frac{1}{1+m_n}\right].
$$
Note that the sequence $(\frac{1}{1+m_n})_{n\geq 0}$ is a monotonically decreasing and bounded sequence from below by $g^2$, and that we have that
$$
\lim_{n\to\infty} \frac{1}{1+m_n} = \frac{1}{1+G} = g^2.
$$
Therefore the set
$$
\bigcup_{n=0}^{\infty} \mathcal{T}_G(D_{\alpha,n}) = \mathcal{T}_G([\alpha,G)\times [0,G]) = (-g^2, \tfrac{1}{\alpha}-1]\times [g^2,1];
$$
will be removed from $\Omega_G$ in the limit as $n\to\infty$.

So repeating for $n\geq 0$ the process of adding $M_{\alpha}(D_{\alpha,n})$ and $M_{\alpha}^2(D_{\alpha,n})$ and deleting $D_{\alpha,n}$ and $\mathcal{T}_G(D_{\alpha,n})$, we find in the limit as $n\to\infty$ that:
$$
\Omega_{\alpha} = [\alpha -2, \frac{1}{\alpha}-1)\times [0,g^2]\cup [ \frac{1}{\alpha}-1, T_\alpha(\alpha-2))\times [0,1] \cup [ T_\alpha(\alpha-2), \alpha )\times [0,G];
$$
see Figure~\ref{fig:Omegaalpha}. This is exactly the domain of the natural extension of the map $T_{\alpha}$ for $1< \alpha < G$, as found by Boca and the fourth author in~\cite{[BM]}.

\begin{figure}[ht]
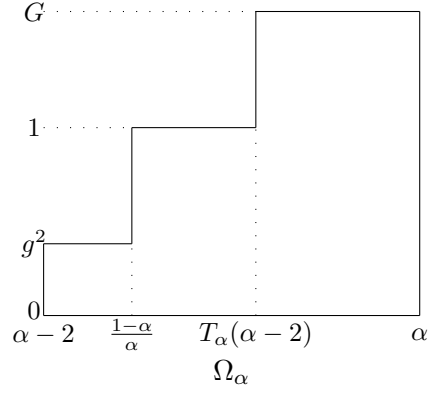

\beginpicture 

\setcoordinatesystem units <0.125cm,0.125cm> 
\setplotarea x from -45 to 30, y from -5 to 35

\putrule from -14 0 to 26 0
\putrule from 26 0 to 26 32.36068
\putrule from 8.571429 32.36068 to 26 32.36068
\putrule from 8.571429 20 to 8.571429 32.36068
\putrule from -4.615385 20 to 8.571429 20
\putrule from -4.615385 7.63932 to -4.615385 20
\putrule from -14 7.63932 to -4.615385 7.63932
\putrule from -14 0 to -14 7.63932

\put {$0$} at -15 0.5
\put {$g^2$} at -15 7.63932
\put {$1$} at -15 20
\put {$G$} at -15 32.36068

\put {$\alpha-2$} at -14 -2
\put {$\frac{1-\alpha}{\alpha}$} at -4.615385 -2
\put {$ T_\alpha(\alpha-2)$} at 8.571429 -2
\put {$\alpha$} at 26 -2

\put {\large $\Omega_{\alpha}$} at 6 -6

\setdots
\putrule from -14 20 to -4.615385 20
\putrule from -14 32.36068 to 8.571429 32.36068
\putrule from -4.615385 0 to -4.615385 7.63932
\putrule from 8.571429 0 to 8.571429 20

\endpicture
\caption{The domain $\Omega_\alpha$ for $\alpha\in (1,G)$.}
\label{fig:Omegaalpha}
\end{figure}

\section{The case $\alpha\in [\frac{ \sqrt{13}-1}{6},g)$}\label{subsec:smalleralpha}
In~\cite{[BM]}, Boca and the fourth author also found by simulation the natural extension for $\alpha = 0.9g$, which is a value of $\alpha$ outside their domain of possible $\alpha$. In this section we will use the approach from the previous section to exactly determine the domain of natural extension of ${\alpha}$ for $\alpha \in [\tfrac{ \sqrt{13}-1}{6},g)$. We will see that these natural extensions are quite more complicated as those for $\alpha\in [g,G]$, which are essentially the unions of 2 or 3 rectangles in $\R^2$. Again, as in the previous section, the natural extensions we find will be isomorphic the natural extension due to Schweiger and Rieger (so the case $\alpha = 1$).\smallskip\

For $\alpha = g$, Boca and the fourth author found in~\cite{[BM]}
$$
\Omega_g = \left( [g-2,g)\times [0,g^2]\right) \cup \left( [ \frac{-g^2}{1+g^2}, g) \times [1,G]\right)
$$
as planar domain for the natural extension $(\Omega_g, \bar{\mathcal{B}}_g, \bar{\mu}_g, \mathcal{T}_g)$; see Figure~\ref{fig:Omegasmalg}.

\begin{figure}[ht]
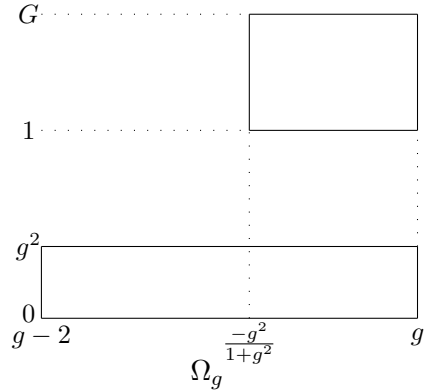

\beginpicture

\setcoordinatesystem units <0.125cm,0.125cm> 
\setplotarea x from -60 to 20, y from -5 to 25

\putrule from -27.63932 0 to 12.36068 0
\putrule from 12.36068 0 to 12.36068 7.63932
\putrule from -27.63932 7.63932 to 12.36068 7.63932
\putrule from -27.63932 0 to -27.63932 7.63932

\putrule from -5.527864 20 to 12.36068 20
\putrule from 12.36068 20 to 12.36068 32.36068
\putrule from -5.527864 32.36068 to 12.36068 32.36068
\putrule from -5.527864 20 to -5.527864 32.36068

\put {$0$} at -29 0.5
\put {$g^2$} at -29 7.63932
\put {$1$} at -29 20
\put {$G$} at -29 32.36068

\put {$g-2$} at -27.63932 -2
\put {$\frac{-g^2}{1+g^2}$} at -5.527864 -2.6
\put {$g$} at 12.36068 -2

\put {\large $\Omega_g$} at -10 -6

\setdots
\putrule from -27.63932 20 to -5.527864 20
\putrule from -27.63932 32.36068 to -5.527864 32.36068

\putrule from -5.527864 0 to -5.527864 20
\putrule from 12.36068 7.63932 to 12.36068 20

\endpicture
\caption{The domain $\Omega_{g}$.}
\label{fig:Omegasmalg}
\end{figure}

Now let $\alpha \in [\frac{1}{2}, g)$. Since $[\alpha, g) \not\subset I_{\alpha}$, we must consider those $(t_n,v_n)\in\Omega_g$ for which $t_n\geq \alpha$. So if $x\in I_g$ has $g$-expansion $x=[0;\varepsilon_1/a_1,\dots]$ and $n\geq 0$ is the first index for which $t_n\geq \alpha$, then $\varepsilon_{n+1} = +1$, $a_{n+1} = 3$, $\varepsilon_{n+2}=-1=\varepsilon_{n+3}$, $a_{n+2}=1$, and $a_{n+3}\geq 5$:
\begin{equation}\label{eq:Tg1expansionofx}
x = \frac{\varepsilon_1}{a_1+\ddots +\displaystyle{\frac{\varepsilon_n}{a_n+ \displaystyle{\frac{1}{ 3+ \displaystyle{\frac{-1}{1+\displaystyle{\frac{-1}{a_{n+3}+\ddots}}}}}}}}} ,
\end{equation}
Removing the $-1/1$ directly after $a_{n+1} (=3)$ in~(\ref{eq:Tg1expansionofx}) yields as an ``intermediate'' expansion of $x$:
\begin{equation}\label{eq:Tg1expansionofxpntermediate}
x = \frac{\varepsilon_1}{a_1+\ddots +\displaystyle{\frac{\varepsilon_n}{a_n + \displaystyle{\frac{1}{2+ \displaystyle{\frac{-1}{a_{n+3}-1+\ddots}}}}}}} .
\end{equation}
Note that in~(\ref{eq:Tg1expansionofxpntermediate}) we have that $a_{n+1}-1 = 2$, and that $a_{n+3}-1$ is even, so~(\ref{eq:Tg1expansionofxpntermediate}) is \emph{not} a continued fraction expansion of $x$ with only odd partial quotients. Inserting $1/1$ ``between'' $a_{n+1}-1(=2)$ and $\frac{-1}{a_{n+3}-1+\dots}$ (in fact, this is undoing an earlier singularization of $1/1$) yields that
\begin{equation}\label{eq:Tg1expansionofxalpha}
x = \frac{\varepsilon_1}{a_1+\ddots +\displaystyle{\frac{\varepsilon_n}{a_n + \displaystyle{\frac{1}{1 +\displaystyle{\frac{1}{1+\displaystyle{\frac{1}{a_{n+3}-2+\ddots}}}}}}}}} .
\end{equation}
Now~(\ref{eq:Tg1expansionofxalpha}) is a continued fraction expansion of $x$ with only odd partial quotients. However, it is \textbf{not} the $\alpha$-expansion of $x$, since for these values of $\alpha$ we cannot have that a partial quotient is equal to 1 while its corresponding $\varepsilon_i=+1$. Singularizing now the first partial quotient 1 after $a_n$ yields:
\begin{equation}\label{eq:Tg1expansionofxalphaalmostthere}
x = \frac{\varepsilon_1}{a_1+\ddots +\displaystyle{\frac{\varepsilon_n}{a_n + 1 + \displaystyle{\frac{-1}{2+\displaystyle{\frac{1}{a_{n+3}-2+\ddots}}}}}}} .
\end{equation}
We find that (\ref{eq:Tg1expansionofxalphaalmostthere}) is a continued fraction expansion of $x$ of which some of the partial quotients are even. Inserting $-1/1$ after $a_n+1$ now yields:
\begin{equation}\label{eq:Tg1expansionofxalphafinallythere}
x = \frac{\varepsilon_1}{a_1+\ddots +\displaystyle{\frac{\varepsilon_n}{a_n + 2 + \displaystyle{\frac{-1}{1+ \displaystyle{\frac{-1}{3+\displaystyle{\frac{1}{a_{n+3}-2+\ddots}}}}}}}}} .
\end{equation}
If we denote the expansion~(\ref{eq:Tg1expansionofxalphafinallythere}) of $x$ as $[a_0^*; \varepsilon_1^*/a_1^*,\varepsilon_2^*/a_2^*,\dots ]$, we find that
\begin{equation}\label{eq:t_n^*}
t_n^* = [0;-1/1,-1/3,1/a_{n+3}-2,\dots]
\end{equation}
and
\begin{equation}\label{eq:v_n^*}
v_n^* = [0;1/a_n+2,\varepsilon_n/a_{n-1},\dots,\varepsilon_2/a_1].
\end{equation}
Recall that for the $g$-expansion of $x$ we have that
$$
t_n = [0; 1/3, -1/1, -1/a_{n+3}, \dots ]
$$
and
$$
v_n = [0; 1/a_n, \varepsilon_n/a_{n-1},\dots,\varepsilon_2/a_1].
$$
But then we have, that:
\begin{eqnarray*}
t_n &=& \frac{1}{3+\displaystyle{\frac{-1}{1+\displaystyle{\frac{-1}{a_{n+3} + \ddots}}}}} \,\, =\,\, \frac{1}{2+\displaystyle{\frac{-1}{a_{n+3}-1+\ddots}}} \,\,
=\,\,  \frac{1}{1+\displaystyle{\frac{1}{1+\displaystyle{\frac{1}{a_{n+3}-2+\ddots}}}}} \\
&=& 1+\frac{-1}{2+\displaystyle{\frac{1}{a_{n+3}-2+\ddots}}} \,\, =\,\, 1+1+\frac{-1}{1+\displaystyle{\frac{-1}{3+\displaystyle{\frac{1}{a_{n+3}-2+\ddots}}}}} \,\, = \,\, 2+t_n^*,
\end{eqnarray*}
from which we see that $t_n^* = t_n-2$ (as was to be expected).

As in the previous sections we also have, since $v_{n-1}=q_{n-2}/q_{n-1}$, that
$$
v_n^* = \frac{v_n}{1+2v_n},
$$
and we see that
$$
(t_n^*,v_n^*) = \left( t_n-2,\frac{v_n}{1+2v_n}\right) ,
$$
and in view of this we define sets $D_{\alpha,\ell} = [\alpha ,g)\times [0,g^2]$, $D_{\alpha ,u} = [\alpha , g)\times [1,G]$, and a map $M_{\alpha}: \Omega_g\to\R^2$ by
$$
M_{\alpha} (t,v) = \begin{cases}
(t,v), & \text{if $(t,v)\not\in D_{\alpha ,\ell}\cup D_{\alpha ,u}$}; \\
\left( t-2, \frac{v}{2v+1}\right), & \text{if $(t,v)\in D_{\alpha ,\ell}\cup D_{\alpha ,u}$};
\end{cases}
$$
see Figure~\ref{fig:Omegagomegaalpha}. We now ``delete'' $D_{\alpha ,\ell}$ and $D_{\alpha ,u}$ from $\Omega_g$, and add the new regions $M_{\alpha}(D_{\alpha ,\ell})$ and $M_{\alpha}(D_{\alpha ,u})$ to $\Omega_g$. These regions are:
$$
M_{\alpha}(D_{\alpha ,\ell}) = [\alpha -2, g-2]\times \left[ 0, \frac{1}{3+G}\right] ,\, \text{and}\quad
M_{\alpha}(D_{\alpha ,u}) = [\alpha -2, g-2]\times \left[ \frac{1}{3},g^2\right].
$$

Note that by ``removing'' $D_{\alpha ,\ell}$ and $D_{\alpha ,u}$ we create ``holes'' in the lower rectangle in $\Omega_g$:
$$
\mathcal{T}_g(D_{\alpha ,\ell}) = \left[ g-2,\frac{1-3\alpha}{\alpha}\right] \times \left[ \frac{1}{4-g},\frac{1}{3}\right]
$$
and
$$
\mathcal{T}_g(D_{\alpha ,u}) = \left[ g-2,\frac{1-3\alpha}{\alpha}\right] \times \left[ \frac{1}{3+G},\frac{1}{4}\right] .
$$
In their turn, these hole create holes in the upper rectangle of $\Omega_g$:
$$
\mathcal{T}_g^2(D_{\alpha ,\ell}) =  \left[ \frac{-g^2}{1+g^2} , \frac{1-2\alpha}{3\alpha -1} \right] \times \left[ \frac{4-g}{3-g},\tfrac{3}{2}\right] ,
$$
and
$$
\mathcal{T}_g^2(D_{\alpha ,u}) = \left[ \frac{-g^2}{1+g^2} , \frac{1-2\alpha}{3\alpha -1} \right] \times
\left[ \frac{3+G}{2+G}, \tfrac{4}{3} \right] .
$$
Note that for $\alpha \in [\frac{\sqrt{13}-1}{6},g)$ we have that
$$
\frac{-g^2}{1+g^2} < \frac{1-2\alpha}{3\alpha -1} \leq \alpha ,
$$
and that $\frac{1-2\alpha}{3\alpha -1} = \alpha$ yields that $\alpha = \frac{ \sqrt{13}-1}{6} = 0.434258545\cdots$; for this value of $\alpha$ we have that the ``upper rectangle'' has two ``holes'' running all the way through the rectangle.

\begin{figure}[ht]
\beginpicture 

\setcoordinatesystem units <0.125cm,0.125cm> 
\setplotarea x from -30 to 90, y from -10 to 30

\putrule from -27.63932 0 to 12.36068 0
\putrule from 12.36068 0 to 12.36068 7.63932
\putrule from -27.63932 7.63932 to 12.36068 7.63932
\putrule from -27.63932 0 to -27.63932 7.63932

\putrule from 10.5 0 to 10.5 7.63932

\putrule from -27.63932 5.91372 to -21.90476 5.91372
\putrule from -21.90476 5.91372 to -21.90476 6.66667
\putrule from -27.63932 6.66667 to -21.90476 6.66667

\putrule from -27.63932 4.330847 to -21.90476 4.330847
\putrule from -21.90476 4.330847 to -21.90476 5
\putrule from -27.63932 5 to -21.90476 5

\putrule from -5.527864 20 to 12.36068 20
\putrule from 12.36068 20 to 12.36068 32.36068
\putrule from -5.527864 32.36068 to 12.36068 32.36068
\putrule from -5.527864 20 to -5.527864 32.36068

\putrule from 10.5 20 to 10.5 32.36068

\put {$0$} at -29 0.5
\put {$g^2$} at -29 7.63932
\put {$1$} at -29 20
\put {$G$} at -29 32.36068

\put {$g-2$} at -28.63932 -2
\put {$\frac{1-3\alpha}{\alpha}$} at -20.90476 -2.5
\put {$\frac{-g^2}{1+g^2}$} at -5.527864 -2.5
\put {$\alpha$} at 10.45 -1.5
\put {$g$} at 12.55 -1.5

\putrule from 26.5 0 to 66.5 0
\putrule from 66.5 0 to 66.5 7.63932
\putrule from 26.5 7.63932 to 66.5 7.63932
\putrule from 26.5 6.66667 to 26.5 7.63932
\putrule from 26.5 6.66667 to 34.09524 6.66667
\putrule from 34.09524 6.66667 to 34.09524 5.91372
\putrule from 28.36068 5.91372 to 34.09524 5.91372
\putrule from 28.36068 5 to 28.36068 5.91372
\putrule from 28.36068 5 to 34.09524 5
\putrule from 34.09524 4.330847 to 34.09524 5
\putrule from 26.5  4.330847 to 34.09524 4.330847
\putrule from 26.5 0 to 26.5 4.330847

\putrule from 48.47214 20 to 66.5 20
\putrule from 66.5 20 to 66.5 32.36068
\putrule from 48.47214 32.36068 to 66.5 32.36068
\putrule from 48.47214 30 to 48.47214 32.36068
\putrule from 48.47214 30 to 54.261 30
\putrule from 54.261 30 to 54.261 27.639
\putrule from 50.47214 27.639 to 54.261 27.639
\putrule from 50.47214 26.666 to 50.47214 27.639
\putrule from 50.47214 26.666 to 54.261 26.666
\putrule from 54.261 25.528 to 54.261 26.666

\putrule from 48.47214 25.528 to 54.261 25.528
\putrule from 48.47214 20 to 48.47214 25.528

\putrule from 58.1052 13.333 to 68.36068 13.333
\putrule from 68.36068 13.333 to 68.36068 14.472
\putrule from 58.1052 14.472 to 68.36068 14.472
\putrule from 58.1052 13.333 to 58.1052 14.472

\putrule from 58.1052 10 to 68.36068 10
\putrule from 68.36068 10 to 68.36068 11.6035
\putrule from 58.1052 11.6035 to 68.36068 11.6035
\putrule from 58.1052 10 to 58.1052 11.6035

\put {$1$} at 25 20
\put {$G$} at 25 32.36068

\put {\tiny $\alpha-2$} at 26.5 -1.5
\put {\tiny $g-2$} at 28.36068 -2.5
\put {$\frac{1-3\alpha}{\alpha}$} at 34.09524 -2.5
\put {\tiny $T_\alpha(\alpha-2)$} at 46 34
\put {$\frac{-g^2}{1+g^2}$} at 50.47214 -2.5
\put {$\frac{1-2\alpha}{3\alpha-1}$} at 54.261 34.3
\put {$\frac{2\alpha-1}{1-\alpha}$} at 58.1052 -2.5
\put {$\alpha$} at 66 -1.5
\put {$g$} at 68.5 -1.5

\put {\large $\Omega_g$} at -10 -6
\put {\large $\Omega_{\alpha,1}$ } at 45 -6
\setdots
\putrule from -27.63932 20 to -5.527864 20
\putrule from -27.63932 32.36068 to -5.527864 32.36068

\putrule from -5.527864 0 to -5.527864 20
\putrule from 12.36068 7.63932 to 12.36068 20

\putrule from 10.5 7.63932 to 10.5 20
\putrule from -21.90476 0 to -21.90476 7.63932

\putrule from 28.36068 0 to 28.36068 7.63932
\putrule from 34.09524 0 to 34.09524 7.63932
\putrule from 48.47214 20 to 48.47214 32.5
\putrule from 50.47214 0 to 50.47214 30
\putrule from 54.261 20 to 54.261 32.5
\putrule from 58.1052 0 to 58.1052 20
\putrule from 66.5 7.63932 to 66.5 20
\putrule from 68.36068 0 to 68.36068 14.472

\putrule from 26.5 32.36068 to 50.47214 32.36068
\putrule from 26.5 20 to 50.47214 20

\endpicture
\caption{The domain $\Omega_{g}$ and the region $\Omega_{\alpha,1}$.}
\label{fig:Omegagomegaalpha}
\end{figure}

From~(\ref{eq:t_n^*}) it follows that
\begin{equation}\label{eq:t_n+1^*}
\mathcal{T}_{\alpha}(t_n^*) = \frac{-1}{t_n^*}-1 = [0;-1/3,1/a_{n+2}-2,\dots] = t_{n+1}^*,
\end{equation}
and from~(\ref{eq:v_n^*}) we see that
\begin{equation}\label{eq:v_n+1^*}
v_{n+1}^* = [0;1/1, -1/a_n+2,\varepsilon_n/a_{n-1},\dots,\varepsilon_2/a_1] = \frac{1}{1-v_n^*} ;
\end{equation}
so it follows that
$$
(t_{n+1}^*,v_{n+1}^*) = \mathcal{T}_{\alpha}(t_n^*,v_n^*).
$$
We have that two rectangles with ``new'' areas are attached to the ``upper rectangle'' in $\Omega_g$:
$$
N_{\alpha,\ell} := \mathcal{T}_{\alpha}(M_{\alpha}(D_{\alpha,\ell})) = \left[ T_{\alpha}(\alpha -2),\frac{-g^2}{1+g^2}\right] \times \left[ 1,\frac{3+G}{2+G}\right]
$$
and
$$
N_{\alpha,u} := \mathcal{T}_{\alpha}(M_{\alpha}(D_{\alpha,u})) = \left[ T_{\alpha}(\alpha -2),\frac{-g^2}{1+g^2}\right] \times \left[ \frac{3}{2},G\right] .
$$
In view of~(\ref{eq:t_n+1^*}) and~(\ref{eq:v_n+1^*}) we now apply the auxiliary map
$$
\mathcal{A}(\xi,\eta ) = \left( -\frac{1}{\xi}-3, \frac{1}{3-\eta}\right)
$$
to $N_{\alpha,\ell}$ and $N_{\alpha,u}$ (note that on these two regions this map is actually $\mathcal{T}_g$), yielding two disjoint ``islands'' (and here we used that $T_{\alpha}(\alpha -2)= \frac{\alpha -1}{2-\alpha}$):
$$
\mathcal{A}( N_{\alpha,\ell}) = \left[ \frac{2\alpha -1}{1-\alpha}, g \right] \times \left[ \frac{1}{2}, \frac{2+G}{3+2G}\right]
$$
and
$$
\mathcal{A}( N_{\alpha,u}) = \left[ \frac{2\alpha -1}{1-\alpha}, g \right] \times \left[ \frac{2}{3},\frac{1}{3-G} \right] ;
$$
see Figure~\ref{fig:Omegagomegaalpha}. Note in particular how well the ``new part'' and the ``holes'' align.

As in Section~\ref{subsec:grotesquesystem}, the domain $\Omega_{\alpha,1}$, given by
\begin{eqnarray*}
\Omega_{\alpha,1} &=& \Omega_g\setminus (\mathcal{T}_g(D_{\alpha, \ell}\cup D_{\alpha, u}) \cup \mathcal{T}_g^2(D_{\alpha, \ell}\cup D_{\alpha, u})) \\
& &\,\, \cup\, M_{\alpha}(D_{\alpha, \ell}\cup D_{\alpha, u}) \cup \mathcal{T}_{\alpha}(M_{\alpha}(D_{\alpha, \ell}\cup D_{\alpha, u})) \\
& &\,\, \cup\, \mathcal{A}(\mathcal{T}_{\alpha}(M_{\alpha}(D_{\alpha, \ell}\cup D_{\alpha, u})))
\end{eqnarray*}
is \textbf{not} a version of the natural extension for $\alpha$; the reason is, that since $T_{\alpha}(\alpha -2)\leq \alpha$ the two rectangles $\mathcal{A}( N_{\alpha,\ell})$ and $\mathcal{A}( N_{\alpha,u})$ have values on their first coordinates which are larger than $\alpha$; see also Figure~\ref{fig:Omegagomegaalpha}. In view of this we define new regions $D_{\alpha,\ell ,1}$ and $D_{\alpha,u,1}$ which must both be moved and deleted in a way similar to what we just did:
$$
D_{\alpha,\ell ,1} = [\alpha ,g)\times \left[ \frac{1}{2}, \frac{2+G}{3+2G}\right]
,\, \text{and}\,\,
D_{\alpha,u,1} = [\alpha ,g)\times  \left[ \frac{2}{3},\frac{1}{3-G} \right] .
$$
So as in Section~\ref{subsec:grotesquesystem} we need to repeat (infinitely often) the above procedure, but now to $D_{\alpha,\ell ,1}$ and $D_{\alpha,u,1}$. We will describe one more ``round,'' and then give the general situation, which can be proved by induction. Extending the domains of the maps we defined in this Section ($M_{\alpha}$ and $\mathcal{A}$) in the obvious way, we find that two new added regions are:
$$
M_{\alpha}(D_{\alpha,\ell ,1}) = [\alpha -2,g-2) \times \left[ \frac{1}{4},\frac{G+2}{4G+7}\right]
$$
and
$$
M_{\alpha}(D_{\alpha,u ,1}) = [\alpha -2,g-2) \times \left[ \frac{2}{7},\frac{1}{4-g}\right]
$$
(note that these new rectangles align well with the existing ``holes''). By ``removing'' $D_{\alpha ,\ell ,1}$ and $D_{\alpha ,u,1}$ we create new ``holes'' in the lower rectangle in $\Omega_g$:
$$
\mathcal{T}_g(D_{\alpha ,\ell ,1}) = \left[ g-2,\frac{1-3\alpha}{\alpha}\right] \times \left[\frac{2G+2}{7G+11},\frac{2}{7} \right]
$$
and
$$
\mathcal{T}_g(D_{\alpha ,u,1}) = \left[ g-2,\frac{1-3\alpha}{\alpha}\right] \times \left[ \frac{2-g}{7-3g},\frac{3}{11}\right] .
$$
Since $\frac{2-g}{7-3g}=0.26855\cdots = \frac{G+2}{4G+7}$ we see again an alignment between new rectangles and new holes. In their turn, these new holes create holes in the upper rectangle of $\Omega_g$:
$$
\mathcal{T}_g^2(D_{\alpha ,\ell ,1}) =  \left[ \frac{-g^2}{1+g^2} , \frac{1-2\alpha}{3\alpha -1} \right] \times \left[ \frac{7G+11}{5G+8}, \frac{7}{5}\right] ,
$$
and
$$
\mathcal{T}_g^2(D_{\alpha ,u,1}) = \left[ \frac{-g^2}{1+g^2} , \frac{1-2\alpha}{3\alpha -1} \right] \times
\left[ \frac{4G+7}{3G+5}, \frac{11}{8}\right] .
$$
Again we have that two rectangles with ``new'' areas are attached to the `upper rectangle' in $\Omega_g$:
$$
N_{\alpha,\ell ,1} := \mathcal{T}_{\alpha}(M_{\alpha}(D_{\alpha,\ell})) = \left[ T_{\alpha}(\alpha -2),\frac{-g^2}{1+g^2}\right] \times \left[ \frac{2G+1}{2G}, \frac{4}{3}\right]
$$
and
$$
N_{\alpha,u,1} := \mathcal{T}_{\alpha}(M_{\alpha}(D_{\alpha,u})) = \left[ T_{\alpha}(\alpha -2),\frac{-g^2}{1+g^2}\right] \times \left[ \frac{7}{5}, \frac{4-g}{3-g}\right] .
$$
Applying the auxiliary map $\mathcal{A}$ to these new rectangles $N_{\alpha,\ell ,1}$ and $N_{\alpha,u,1}$ yields two disjoint ``islands'' located in between the previous two ``islands'':
$$
\mathcal{A}( N_{\alpha,\ell ,1}) = \left[ \frac{2\alpha -1}{1-\alpha}, g \right] \times \left[ \frac{2}{4-g},\frac{3}{5} \right]
$$
and
$$
\mathcal{A}( N_{\alpha,u,1}) = \left[ \frac{2\alpha -1}{1-\alpha}, g \right] \times \left[ \frac{3-g}{5-2g},\frac{5}{8} \right] .
$$
As for $\Omega_{\alpha ,1}$, the domain $\Omega_{\alpha,2}$, given by
\begin{eqnarray*}
\Omega_{\alpha,2} &=& \Omega_{\alpha ,1}\setminus (\mathcal{T}_g(D_{\alpha, \ell ,1}\cup D_{\alpha, u,1}) \cup \mathcal{T}_g^2(D_{\alpha, \ell ,1}\cup D_{\alpha, u,1})) \\
& &\,\, \cup\, M_{\alpha}(D_{\alpha, \ell ,1}\cup D_{\alpha, u,1}) \cup \mathcal{T}_{\alpha}(M_{\alpha}(D_{\alpha, \ell ,1}\cup D_{\alpha, u,1})) \\
& &\,\, \cup\, \mathcal{A}(\mathcal{T}_{\alpha}(M_{\alpha}(D_{\alpha, \ell ,1}\cup D_{\alpha, u,1})))
\end{eqnarray*}
is \textbf{not} a version of the natural extension for $\alpha$; again two rectangles are ``sticking out'':
$$
D_{\alpha ,\ell ,2} = \left[ \alpha , g \right] \times \left[ \frac{2}{4-g},\frac{3}{5} \right]
\,\, \text{and}\,\,
D_{\alpha ,u,2} = \left[ \alpha , g \right] \times  \left[ \frac{3-g}{5-2g},\frac{5}{8} \right] .
$$
By repeating the above procedure, we define for $n\geq 1$ nested disjoint rectangles $D_{\alpha , \ell, n}$ and $D_{\alpha ,u, n}$.\medskip\

Note, that for $(u,v)\in D_{\alpha, \ell}\cup D_{\alpha , u}\cup D_{\alpha ,\ell , 1}\cup D_{\alpha, u,1}$ we have that
\begin{equation}\label{eq:regularexpansionV}
v\mapsto \frac{v}{2v+1}\mapsto \frac{1}{1-\frac{v}{2v+1}} = \frac{2v+1}{v+1} \mapsto \frac{1}{3-\frac{2v+1}{v+1}} = \frac{v+1}{v+2} =
\frac{1}{1+ {\displaystyle \frac{1}{1 + \displaystyle \frac{1}{1+v}}}},
\end{equation}
and we see that the function $V:[0,G]\to\R$, defined by
$$
V(x) = \frac{1+x}{2+x},
$$
is monotonically increasing, with fixed point $g$. Setting $D_{\alpha , \ell, 0}:=D_{\alpha , \ell}$ and $D_{\alpha ,u,0}:=D_{\alpha , u}$, we see for $n\geq 0$ that:
$$
D_{\alpha , \ell, n} = \Big[\alpha,g\Big) \times V^n\left( [0,g^2] \right),\,\,
\text{and}\,\,\,\,
D_{\alpha , u, n} = \Big[\alpha,g\Big) \times V^n\left( [1,G] \right) .
$$
Setting for $n\geq 0$:
$$
V^n\left( [0,g^2] \right) = [\ell_n,u_n],\quad \text{and}\quad  V^n\left( [1,G] \right) = [L_n,U_n],
$$
so $\ell_0=0$, $u_0=g^2$, $L_0=1$ and $U_0=G$, we see that it follows from~(\ref{eq:regularexpansionV}) that the regular continued fraction expansions of $\ell_n$, $u_n$, $L_n$ and $U_n$ can be explicitly given; for $n\geq 1$:
$$
\ell_n= [0; 1^{2n}]\,\,\, (= [0; \underbrace{1,1,\dots,1}_{2n-\text{times}}]),\quad u_n = [0;1^{2n}, 2, \bar{1}\, ],\quad L_n=[0;1^{2n+1}],
$$
(the bar indicates the periodic part; here we used that $g^2=[0;2,\overline{1}\, ]$), and
$$
U_n = [0; 1^{2n-1},2, \overline{1}\, ]
$$
(here we used that $G=[1;\overline{1}\, ] = 1+[0;\overline{1}\, ]$). Elementary properties of regular continued fraction expansions now yield that for $n\geq 0$:
$$
\ell_n < u_n < \ell_{n+1},\quad L_{n+1} < U_{n+1} < L_{n},
$$
and
$$
\ell_n\uparrow g,\quad \text{resp.} \quad L_n\downarrow g, \quad  \text{as $n\to\infty$}.
$$
Due to this, and due to the fact that the maps involved ($M_{\alpha}$, $\mathcal{T}_g$, $\mathcal{T}_{\alpha}$ and $\mathcal{A}$) are all bijective a.s.\ we see that $(M_{\alpha}(D_{\alpha ,\ell , n}))_n$ is a disjoint sequence of rectangles, converging from below to the line segment $[\alpha -2,g-2)\times \{ \frac{1}{G+2} \}$, and that $(M_{\alpha}(D_{\alpha ,u, n}))_n$ is a disjoint sequence of rectangles, also converging (but now from above) to the same line segment $[\alpha -2,g-2)\times \{ \frac{1}{G+2} \}$. Due to this we have that $(\mathcal{T}_{\alpha}(M_{\alpha}(D_{\alpha ,\ell , n})))_n$ is a disjoint sequence of rectangles, converging from below to the line segment $[T_{\alpha}(\alpha -2),T_g(g-2))\times \{ 1+g^2 \}$, and that $(T_{\alpha}(M_{\alpha}(D_{\alpha ,u, n})))_n$ is a disjoint sequence of rectangles, also converging (but now from above) to the same line segment $[T_{\alpha}(\alpha -2),T_g(g-2))\times \{ 1+g^2 \}$.\medskip\

For $n\geq 1$ we have that:
$$
M_{\alpha}(D_{\alpha ,\ell ,n}) = [ \alpha -2,g-2 ) \times \left[ \frac{\ell_n}{2\ell_n+1},\frac{u_n}{2u_n+1}\right] ,
$$
where by definition of $M_{\alpha}$ we have that
$$
\frac{\ell_n}{2\ell_n+1} = \frac{1}{2+ {\displaystyle \frac{1}{\ell_n}}} = [0;3,1^{2n-1}].
$$
In the same way we see that
$$
\frac{u_n}{2u_n+1} = [0;3,1^{2n-1},2,\overline{1}\, ], \quad \frac{L_n}{2L_n+1} = [0;3,1^{2n}], \quad  \frac{U_n}{2U_n+1} = [0;3,1^{2n-2},2,\overline{1}\, ].
$$

Using~(\ref{eq:regularexpansionV}) we can obtain in a similar way the regular continued fraction expansions of all endpoints of the second coordinate interval of all of the other added or deleted rectangles. For example, for $n\geq 1$ we have that
$$
\mathcal{T}_g(D_{\alpha,\ell ,n}) = \Big( g-2,\frac{1-3\alpha}{\alpha}\Big] \times \left[ \frac{1}{3+u_n},\frac{1}{3+\ell_n}\right]
$$
and
$$
\mathcal{T}_g(D_{\alpha,u,n}) = \Big( g-2,\frac{1-3\alpha}{\alpha}\Big] \times \left[ \frac{1}{3+U_n},\frac{1}{3+L_n}\right],
$$
where
$$
\frac{1}{3+u_n}= [0;3,1^{2n},2,\overline{1}],\quad \frac{1}{3+\ell_n} = [0;3,1^{2n}],
$$
and
$$
\frac{1}{3+U_n}= [0;3,1^{2n-1},2,\overline{1}],\quad \frac{1}{3+L_n} = [0;3,1^{2n+1}].
$$
Recall we already saw that:
$$
\mathcal{T}_g(D_{\alpha,\ell}) = \Big( g-2,\frac{1-3\alpha}{\alpha}\Big] \times \left[ \frac{1}{4-g},\frac{1}{3}\right]
$$
and that
$$
\mathcal{T}_g(D_{\alpha,u}) = \Big( g-2,\frac{1-3\alpha}{\alpha}\Big] \times \left[ \frac{1}{3+G},\frac{1}{4}\right].
$$
Note that the added intervals and the deleted intervals ``align'' (see also Figure~\ref{fig:Omegagomegaalphazoom}).

\begin{figure}[ht]
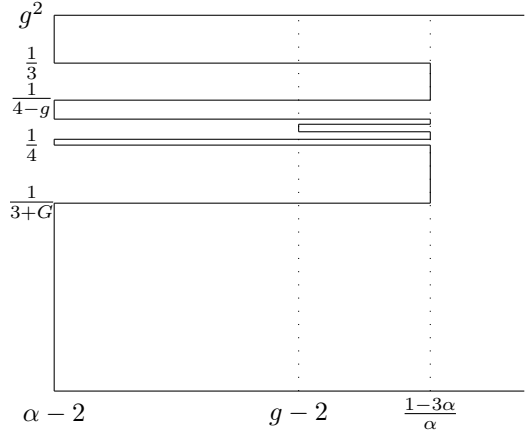

\beginpicture

\setcoordinatesystem units <0.25cm,0.25cm> 
\setplotarea x from -10 to 30, y from -2 to 22

\putrule from 0 0 to 25 0
\putrule from 0 0 to 0 10
\putrule from 0 10 to 20 10
\putrule from 20 10 to 20 13.090
\putrule from 0 13.090 to 20 13.090
\putrule from 0 13.090 to 0 13.4
\putrule from 0 13.4 to 20 13.4

\putrule from 20 13.4 to 20 13.8
\putrule from 13 13.8 to 20 13.8
\putrule from 13 13.8 to 13 14.2
\putrule from 13 14.2 to 20 14.2
\putrule from 20 14.2 to 20 14.472

\putrule from 0 14.472 to 20 14.472
\putrule from 0 15.482 to 0 14.472
\putrule from 0 15.482 to 20 15.482
\putrule from 20 15.482 to 20 17.453
\putrule from 0 17.453 to 20 17.453
\putrule from 0 17.453 to 0 20 
\putrule from 0 20 to 25 20

\put {$\alpha-2$} at 0 -1.2
\put {$g-2$} at 13 -1.2
\put {$\frac{1-3\alpha}{\alpha}$} at 20 -1.2

\put {$\frac{1}{3+G}$} at -1.2  10 
\put {$\frac{1}{4}$} at -1.2 13.090
\put {$\frac{1}{4-g}$} at -1.2 15.482 
\put {$\frac{1}{3}$} at -1.2 17.453
\put {$g^2$} at -1.2 20 

\setdots
\putrule from 13 0 to 13 20
\putrule from 20 0 to 20 20

\endpicture
\caption{The left part of the natural extension after ``one more round'' for $\alpha\in \frac{\sqrt{13}-1}{6} \leq \alpha < g$.}
\label{fig:Omegagomegaalphazoom}
\end{figure}

Setting $D_{\alpha , \ell,0}=D_{\alpha ,\ell}$ and $D_{\alpha , u,0}=D_{\alpha ,u}$, we have obtained for $\alpha \in [\tfrac{1}{2},g)$ the result stated in the following Theorem.

\begin{theorem}\label{thm:newNatExt}
Let $\frac{\sqrt{13}-1}{6} \leq \alpha < g$, then a version of the natural extension of the odd $\alpha$-continued fraction expansion map $T_{\alpha}$ from (\ref{eq:alphamap}) is given by
\begin{eqnarray*}
\Omega_{\alpha} &=& \Big( \bigcup_{n=0}^{\infty} M_{\alpha}(D_{\alpha ,\ell,n}\cup D_{\alpha ,u,n}) \Big) \cup  \Big( \bigcup_{n=0}^{\infty} \mathcal{T}_{\alpha}(M_{\alpha}(D_{\alpha ,\ell,n}\cup D_{\alpha ,u,n}))\Big) \\
& & \cup\,\,  \Big( [g-2,\alpha) \times [0,g^2] \Big) \setminus \Big( \bigcup_{n=0}^{\infty} \mathcal{T}_g(D_{\alpha ,\ell ,n}\cup D_{\alpha ,u,n}) \Big) \\
& &   \cup\,\,  \Big( [\frac{-1}{G+2},\alpha ) \times [1,G] \Big) \setminus \Big( \bigcup_{n=0}^{\infty} \mathcal{T}_g^2(D_{\alpha ,\ell ,n}\cup D_{\alpha ,u,n}) \Big) \\
& & \cup\,\,  \Big( \bigcup_{n=0}^{\infty} \mathcal{A}\Big( \mathcal{T}_{\alpha}(M_{\alpha}(D_{\alpha ,\ell,n}\cup D_{\alpha ,u,n}))\Big) \Big) .
\end{eqnarray*}
The dynamical system $(\Omega_{\alpha}, \mathcal{B}_{\alpha}, \mu_{\alpha}, \mathcal{T}_{\alpha})$, where $\mathcal{B}_{\alpha}$ is the collection of Borel subsets of $\Omega_{\alpha}$ and $\mu_{\alpha}$ is a probability measure on $(\Omega_{\alpha}, \mathcal{B}_{\alpha})$ with density
$$
d_{\alpha}(x,y) = \frac{1}{3\log G} \frac{1}{(1+xy)^2},\quad \text{for $(x,y)\in \Omega_{\alpha}$},
$$
and $d_{\alpha}(x,y)=0$ elsewhere, is ergodic and metrically isomorphic to the natural extension for any other $\alpha^*\in [\frac{\sqrt{13}-1}{6}, G]$. As a consequence, we have that the Kolmogorov-Sinai entropy equals $\pi^2/(9\log G)$ for these values of $\alpha$.
\end{theorem}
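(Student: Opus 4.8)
The plan is to promote the explicit cut-and-paste construction developed just before the statement into a genuine measure-theoretic isomorphism between Schweiger's system for $\alpha=g$ and the candidate system $(\Omega_\alpha,\mathcal{B}_\alpha,\mu_\alpha,\mathcal{T}_\alpha)$. Concretely, I would package the recoding of $g$-expansions into $\alpha$-expansions as a map $\Phi\colon\Omega_g\to\Omega_\alpha$: on the portion of $\Omega_g$ whose coordinates are never altered by the singularizations and insertions, $\Phi$ is the identity; on each rectangle $D_{\alpha,\ell,n}\cup D_{\alpha,u,n}$ (the locus where a point first has future $t_n\geq\alpha$) and on the finitely many subsequent iterates whose digits get changed, $\Phi$ is given by the explicit planar moves $M_{\alpha}$, $\mathcal{T}_{\alpha}\circ M_{\alpha}$ and $\mathcal{A}\circ\mathcal{T}_{\alpha}\circ M_{\alpha}$ recorded above. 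The first task is to check that $\Phi$ is a bijection modulo a set of Lebesgue measure zero and that it intertwines the two natural-extension maps, $\Phi\circ\mathcal{T}_g=\mathcal{T}_{\alpha}\circ\Phi$ a.e. The key algebraic input is already isolated in the relations $(t_n^*,v_n^*)=(t_n-2,\frac{v_n}{1+2v_n})$, $t_{n+1}^*=T_{\alpha}(t_n^*)$, $v_{n+1}^*=\frac{1}{1-v_n^*}$ and their analogues under $\mathcal{A}$: each elementary singularization/insertion is realized on the planar level by a map acting as a M\"obius transformation in the $t$-coordinate and by the corresponding dual transformation in the $v$-coordinate, and these conjugate the digit strings correctly, so collecting them yields the intertwining on every piece.

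Second, I would verify invariance of $\mu_{\alpha}$. Here the point is that every building block of $\Phi$ — the maps $M_{\alpha}$ and $\mathcal{A}$ (the latter being, on the relevant regions, simply a branch of $\mathcal{T}_g$), and the branches of $\mathcal{T}_g$ and $\mathcal{T}_{\alpha}$ — is a M\"obius map in $t$ paired with its dual M\"obius map in $v$, and such pairs preserve the form $\frac{dt\,dv}{(1+tv)^2}$. Since $\Phi$ is built from finitely many such blocks on each rectangle and hence preserves this density piecewise, and since $\mathcal{T}_g$ preserves $\bar{\mu}_g$ by the Schweiger--Rieger theorem, the pushforward $\mu_{\alpha}=\Phi_*\bar{\mu}_g$ is $\mathcal{T}_{\alpha}$-invariant and has the stated density $\frac{1}{3\log G}\frac{1}{(1+xy)^2}$ on $\Omega_{\alpha}$.

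Third — and this is where the real work lies — I would prove the tiling/alignment statement that makes $\Phi$ well defined and surjective in the limit. This is an induction on $n$: one must show that the rectangles removed from $\Omega_g$, namely $\mathcal{T}_g(D_{\alpha,\cdot,n})$ and $\mathcal{T}_g^2(D_{\alpha,\cdot,n})$, are exactly matched by the rectangles added, $M_{\alpha}(D_{\alpha,\cdot,n})$, $\mathcal{T}_{\alpha}(M_{\alpha}(D_{\alpha,\cdot,n}))$ and $\mathcal{A}(\mathcal{T}_{\alpha}(M_{\alpha}(D_{\alpha,\cdot,n})))$, with no overlaps and no gaps apart from measure zero. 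The explicit regular continued fraction expansions of the endpoints $\ell_n,u_n,L_n,U_n$ and of their images, such as $\frac{\ell_n}{2\ell_n+1}=[0;3,1^{2n-1}]$ and $\frac{1}{3+\ell_n}=[0;3,1^{2n}]$, are precisely the bookkeeping that forces these endpoints to coincide, while the monotonicity $\ell_n<u_n<\ell_{n+1}$, $L_{n+1}<U_{n+1}<L_n$ together with $\ell_n\uparrow g$ and $L_n\downarrow g$ guarantees that the nested rectangles exhaust the regions $[\alpha,g)\times[0,g^2]$ and $[\alpha,g)\times[1,G]$ without interior accumulation. The hypothesis $\alpha\geq\frac{\sqrt{13}-1}{6}$ enters here as the sharp condition $\frac{1-2\alpha}{3\alpha-1}\leq\alpha$, which keeps the holes in the upper rectangle to the left of $\alpha$ so that the procedure never deletes material already certified to belong to $\Omega_{\alpha}$; at $\alpha=\frac{\sqrt{13}-1}{6}$ the holes just reach the right edge, the limiting case. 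I expect this induction, and the careful accounting of the countably many pieces in the limit, to be the main obstacle.

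Finally, the remaining assertions follow formally. Because $\Phi$ is a measure-preserving isomorphism intertwining $\mathcal{T}_g$ and $\mathcal{T}_{\alpha}$, and $(\Omega_g,\mathcal{B}_g,\bar{\mu}_g,\mathcal{T}_g)$ is (a version of) the natural extension of $T_g$, the system $(\Omega_{\alpha},\mathcal{B}_{\alpha},\mu_{\alpha},\mathcal{T}_{\alpha})$ is a version of the natural extension of $T_{\alpha}$. Composing $\Phi$ with the isomorphism of Section~\ref{subsec:odcfs} that identifies $\Omega_g$ with Schweiger's model for $\alpha=1$, and with the isomorphisms of Sections~\ref{subsec:odcfs} and~\ref{subsec:grotesquesystem} linking every $\alpha^*\in[g,G]$ back to that same model, yields a metric isomorphism to the natural extension for any $\alpha^*\in[\frac{\sqrt{13}-1}{6},G]$. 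Ergodicity then transfers from Schweiger's theorem for $\alpha=1$ through these isomorphisms. For the entropy, I would invoke that Kolmogorov--Sinai entropy is an isomorphism invariant and that the natural extension has the same entropy as the factor $T_{\alpha}$; since all these systems are isomorphic, the entropy equals the common value $\pi^2/(9\log G)$ already computed for $\alpha\in[g,G]$ in~\cite{[BM]}. A direct Rokhlin-formula computation $-2\int\log|x|\,d\mu_{\alpha}$ would give the same number, but by isomorphism this is unnecessary.
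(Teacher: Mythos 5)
Your outline is a faithful formalization of the construction the paper develops in the text preceding the theorem, and for $\alpha\in[\tfrac{1}{2},g)$ it is sound and matches the intended argument: packaging the singularization/insertion recoding into a planar map, the intertwining relations $(t_n^*,v_n^*)=(t_n-2,\tfrac{v_n}{1+2v_n})$ and their successors, the preservation of the density $(1+tv)^{-2}$ by each M\"obius pair, the endpoint bookkeeping via the regular continued fraction expansions of $\ell_n,u_n,L_n,U_n$, and the transfer of ergodicity and entropy through the chain of isomorphisms back to Schweiger's system. You also correctly locate where the hypothesis enters, via $\frac{1-2\alpha}{3\alpha-1}\leq\alpha$ with equality at $\frac{\sqrt{13}-1}{6}$.

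There is, however, a genuine gap: you treat the entire range $[\frac{\sqrt{13}-1}{6},g)$ uniformly from $\Omega_g$, but the digit analysis on which your map $\Phi$ rests --- that a first passage $t_n\geq\alpha$ forces the $g$-expansion pattern $\varepsilon_{n+1}=+1$, $a_{n+1}=3$, $a_{n+2}=1$, $\varepsilon_{n+2}=\varepsilon_{n+3}=-1$, $a_{n+3}\geq 5$, which is what makes the chain (\ref{eq:Tg1expansionofx})--(\ref{eq:Tg1expansionofxalphafinallythere}) applicable --- is valid only when $t_n\in[\tfrac{1}{2},g)$. For $\alpha<\tfrac{1}{2}$ the first passage can occur with $t_n\in[\alpha,\tfrac{1}{2})$, and there $T_g^2(t_n)=\tfrac{1}{|T_g(t_n)|}-1\in(0,\alpha]$ is \emph{positive}: the pattern becomes $\varepsilon_{n+3}=+1$ with $a_{n+3}\geq 3$, the block $-1/1$ that your recoding removes is absent, and $\Phi$ as you define it is not given by that chain on these pieces (note also that $a_{n+3}-2$ could then equal $1$ with $\varepsilon=+1$, which is inadmissible for these $\alpha$). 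This is exactly what the paper's actual proof of the theorem addresses: the preceding construction is asserted only for $\alpha\in[\tfrac{1}{2},g)$, and for $\alpha\in[\frac{\sqrt{13}-1}{6},\tfrac{1}{2})$ the paper runs the procedure a second time starting from the already-built system at $\alpha=\tfrac{1}{2}$ (not from $\Omega_g$), using the changed digit pattern $\varepsilon_{n+3}=+1$, $a_{n+3}\geq3$ (which arises since $T_{1/2}^2(\tfrac{1}{2})=0$), and with the key structural simplification that a single round now suffices, the region to be moved being the comb $D_\alpha=[\alpha,\tfrac{1}{2})\times\bigcup_{n\geq0}\left([\ell_n,u_n]\cup[L_n,U_n]\right)$. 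Without this two-stage reduction your proposal proves the theorem only on $[\tfrac{1}{2},g)$, not on the stated range.
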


\begin{proof}[Proof of Theorem \ref{thm:newNatExt}] As stated before, we only need to prove the Theorem for $\alpha \in \Big[ \frac{\sqrt{13}-1}{6}, \tfrac{1}{2}\Big)$. We repeat the procedure we have used now several times, starting with $\alpha = \tfrac{1}{2}$, and choosing $\frac{\sqrt{13}-1}{6} \leq \alpha < \tfrac{1}{2}$. Since $T_{\frac{1}{2}}^2(\tfrac{1}{2}) = 0$, the situation is slightly different from the case where $\tfrac{1}{2}\leq \alpha < g$: if $x\in I_{\tfrac{1}{2}}$ has $\tfrac{1}{2}$-expansion $x = [0;\varepsilon_1/a_1,\dots ]$ and $n\geq 0$ is the first index for which $t_n\geq \alpha$, then $\varepsilon_{n+1} = +1$, $a_{n+1}=3$, $\varepsilon_{n+2}=-1$, $a_{n+2}=1$ and (and this is different from the case $\tfrac{1}{2}\leq \alpha < g$): $\varepsilon_{n+3} = +1$, $a_{n+3}\geq 3$. What is also different from the case $\tfrac{1}{2}\leq \alpha < g$, is that we only need to do the procedure once; now
$$
D_{\alpha} = [\alpha ,\tfrac{1}{2}) \times \bigcup_{n=0}^{\infty} \left( [\ell_n,u_n]\cup [L_n,U_n]\right).
$$
Details are left to the reader.
\end{proof}

\begin{remark}
{\rm Note that we must have in Theorem~\ref{thm:newNatExt} that $\frac{\sqrt{13}-1}{6} \leq \alpha < g$; in case $\alpha = \frac{\sqrt{13}-1}{6}$ one can see that $T_g^2(\alpha )=\alpha$, and that the ``hole'' in the top rectangle of $\Omega_g$ will be ``all the way through.'' Clearly a new situation arises for values of $\alpha$ smaller than $\frac{\sqrt{13}-1}{6}$, which can be addressed by our approach, but things are becoming even more tedious for such values.}
\end{remark}

\section{Entropy}\label{sec:ent}
Entropy as a function of $\alpha$ is widely studied for several different families of continued fractions. Nakada's $\alpha$-continued fractions~\cite{[CIT],[LM],[N]}, Ito and Tanaka's $\alpha$-continued fractions~\cite{[CLS]} and Katok and Ugarcovici's $\alpha$-continued fractions~\cite{[KUa],[KUb]} are the best known examples. Various tools work in a similar way for these families (see~\cite{[L]}, Section 3 for a general set up). In this section we will compare the entropy function of Odd $\alpha$-continued fractions with the entropy function of Nakada's $\alpha$-continued fractions (see Figure~\ref{fig:ent} for both functions). Nakada's $\alpha$-continued fractions are the most studied. They were introduced in 1981 by Nakada~\cite{[N]} and brought back under attention by Luzzi and Marmi in 2008 (see~\cite{[LM]}). For the odd $\alpha$-continued fraction the entropy is explicitly known on the interval $[ \frac{\sqrt{13}-1}{6} ,G]$ and equals $\frac{\pi^2}{9\log G}$. This value is found in~\cite{[BM]} for the interval $[g,G]$ and follows from the previous section for $\alpha\in( \frac{ \sqrt{13}-1}{6} ,g)$. For Nakada's $\alpha$-continued fraction the entropy is known on $[g^2,1]$ and is given by
$h(\alpha)=\frac{\pi^2}{6\log(1+\alpha)}$ for $\alpha\in(g,1]$ and $h(\alpha)=\frac{\pi^2}{6\log(G)}$ for $\alpha\in[g^2,g]$; see~\cite{[KSS],[N]}. Note that for both families the entropy is not explicitly known on a large part of the parameter space, but obtained/estimated by simulation.

\begin{figure}[h!]
  \centering
{\includegraphics[width=0.45\textwidth]
{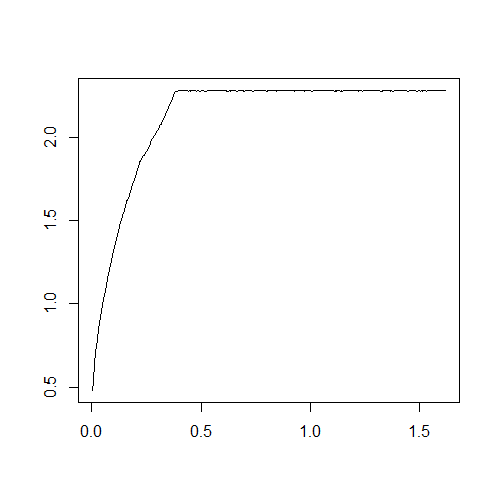}}
  \hfill
{\includegraphics[width=0.45\textwidth]
{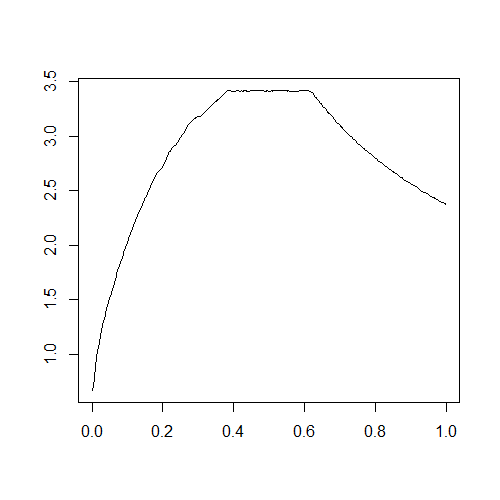}}
  \caption{The entropy plotted as a function of $\alpha$. On the left for the Odd  $\alpha$-continued fractions, on the right for Nakada's $\alpha$-continued fractions. The values in these plots are obtained by simulations.}\label{fig:ent}
\end{figure}

In~\cite{[NN]} it is proven by Nakada and Natsui that for Nakada's $\alpha$-continued fractions there exists decreasing sequences of intervals $(I_n),(J_n),(K_n),$ $(L_n)$ such that $\frac{1}{n}\in I_n$, $I_{n+1}<J_n<K_n<L_N<I_n$ and the entropy of $\hat{T}_\alpha$ is increasing on $I_n$, decreasing on $K_n$ and constant on $J_n$ and $L_n$. Here the map $\hat{T}_\alpha: [\alpha-1,\alpha)\rightarrow[\alpha-1,\alpha)$ is Nakada's $\alpha$-continued fraction map defined as $\frac{1}{|x|}-\left\lfloor \frac{1}{|x|} + 1 - \alpha\right \rfloor$. Do we observe the same phenomena for the Odd $\alpha$-continued fractions? In other words, can we also find decreasing sequences of intervals $(I_n),(J_n),(K_n),$ $(L_n)$ with these properties? Figure~\ref{fig:entzoom} shows the entropy function of both the odd $\alpha$-continued fractions and Nakada's $\alpha$-continued fractions between $\frac{1}{4}$ and $\frac{1}{3}$.

\begin{figure}[h!]
  \centering
{\includegraphics[width=0.45\textwidth]
{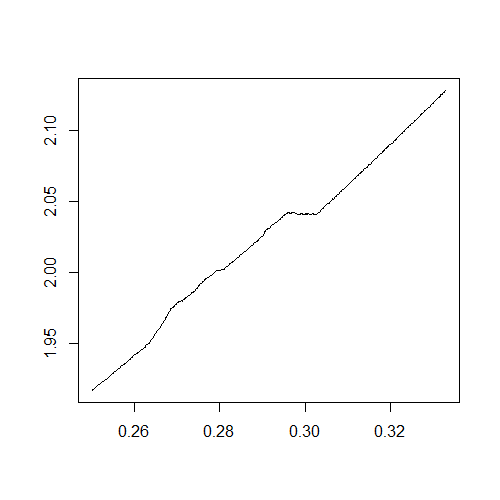}}
  \hfill
{\includegraphics[width=0.45\textwidth]
{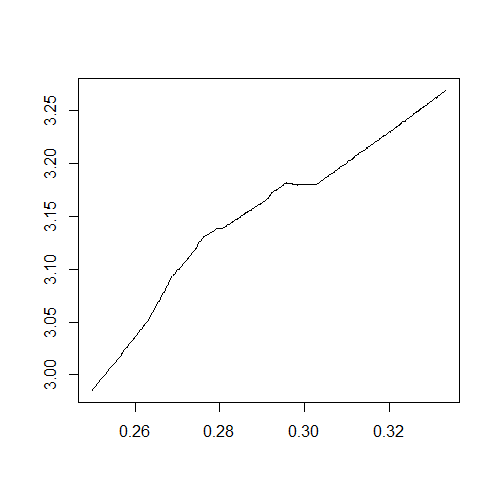}}
  \caption{Simulations of the entropy as a function of $\alpha$ on the interval $[\frac{1}{4},\frac{1}{3}]$. On the left for the odd  $\alpha$-continued fractions, on the right for Nakada's $\alpha$-continued fractions.}\label{fig:entzoom}
\end{figure}

For both types of continued fraction transformations, we observe intervals on which the entropy as a function of $\alpha$ is constant as well as intervals where it is either increasing or decreasing. However the intervals on which the entropy is decreasing are more difficult to spot. A closer look around $\frac{14}{47}$ shows that there are also intervals on which the entropy of $T_\alpha$ is decreasing (see Figure~\ref{fig:entzoom2}).
 
\begin{figure}[h!]
  \centering
{\includegraphics[width=0.45\textwidth]
{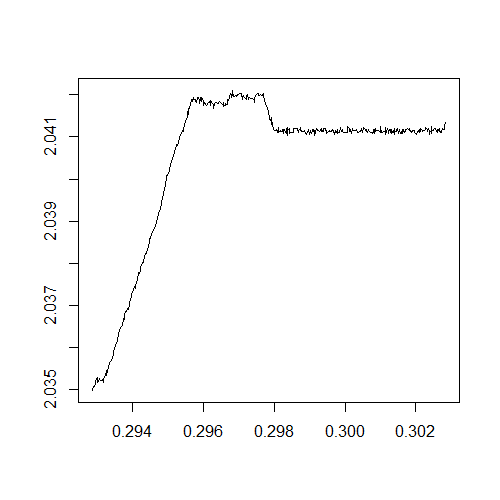}}
  \hfill
{\includegraphics[width=0.45\textwidth]
{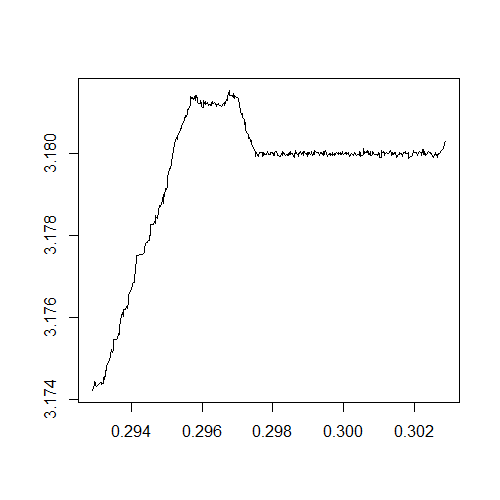}}
  \caption{Simulations of the entropy as a function of $\alpha$ on an interval containing $\frac{14}{47}$. On the left for the odd  $\alpha$-continued fractions, on the right for Nakada's $\alpha$-continued fractions.}\label{fig:entzoom2}
\end{figure}
In analogy with the main result of~\cite{[NN]} for Nakada's $\alpha$-continued fractions we have the following result.

\begin{theorem}\label{theorem}
There exists sequences of intervals $(I_n),(J_n),(K_n),(L_n),n\geq 3$ such that
\begin{enumerate}
\item{$\frac{1}{n}\in I_n$,}
\item{$I_{n+1}<J_n<K_n<L_n<I_n$,}
\item{The entropy of $T_\alpha$ is increasing on $I_n$, decreasing on $K_n$ and constant on $J_n$ and $L_n$ for every $n\geq 3$.}
\end{enumerate}
Here $I<J$ means that any element of $J$ is strictly larger than any element of $I$.
\end{theorem}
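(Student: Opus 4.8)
The plan is to convert the entropy question into a question about the $\frac{dx\,dy}{(1+xy)^2}$-area of the planar natural extension, and then to read off the variation of that area from \emph{matching} of the endpoint orbits. Since $\frac1n<\frac{\sqrt{13}-1}{6}$ for every $n\ge 3$, the relevant parameters lie below the range covered by Theorem~\ref{thm:newNatExt}, where $\Omega_\alpha$ is too intricate to describe in closed form; the point of matching is that it controls the area \emph{locally} without requiring a global description of $\Omega_\alpha$. The first step is to record that, for every $\alpha$ in the range considered, the natural extension of $T_\alpha$ carries the invariant density $(1+xy)^{-2}$ up to normalization, so that by Rohlin's formula
$$
h(T_\alpha)=\frac{1}{C(\alpha)}\iint_{\Omega_\alpha}\log\frac{1}{x^2}\,\frac{dx\,dy}{(1+xy)^2},\qquad C(\alpha)=\iint_{\Omega_\alpha}\frac{dx\,dy}{(1+xy)^2}.
$$
As in the analogous computation of Nakada and Natsui, the numerator is independent of $\alpha$ and equals $\pi^2/3$ (consistent with $h=\frac{\pi^2}{9\log G}$ and $C=3\log G$ on $[\frac{\sqrt{13}-1}{6},G]$ from Theorem~\ref{thm:newNatExt}), so that $h(T_\alpha)=\frac{\pi^2/3}{C(\alpha)}$. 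Thus $h$ is increasing, decreasing, or constant exactly where $C$ is decreasing, increasing, or constant, and the whole theorem reduces to the behaviour of $C(\alpha)$ near the points $\frac1n$.

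Next I would make the matching mechanism precise. The boundary of $\Omega_\alpha$ is cut out by the forward orbits of the two endpoints of $I_\alpha=[\alpha-2,\alpha)$, namely $T_\alpha^{k}(\alpha)$ and $T_\alpha^{k}(\alpha-2)$ (with the one-sided value at the open endpoint). I say that \emph{matching} holds at $\alpha$, with \emph{matching exponents} $(M,N)$, if $M$ and $N$ are the least indices with $T_\alpha^{M}(\alpha)=T_\alpha^{N}(\alpha-2)$ and the digits $(\varepsilon,d_\alpha)$ along the two orbit segments are locally constant in $\alpha$. On a maximal interval where the same matching relation with the same combinatorial data persists --- a \emph{matching interval} --- the finitely many added and deleted rectangles that assemble $\Omega_\alpha$ (exactly as in Sections~\ref{subsec:singularizationinsertions1}--\ref{subsec:smalleralpha}) move real-analytically with $\alpha$, so $C$ is real-analytic there and $C'(\alpha)$ is a finite sum of boundary terms.

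The core computation is the evaluation of $C'(\alpha)$ on a matching interval. Each moving vertical boundary sitting over an orbit point $x=b(\alpha)$ and spanning a fibre $[c,d]$ contributes $b'(\alpha)\int_c^d\frac{dy}{(1+by)^2}$ to $C'(\alpha)$, and differentiating the Möbius factors of $T_\alpha$ along each orbit turns the full sum into a telescoping expression. Because matching forces $T_\alpha^{M}(\alpha)=T_\alpha^{N}(\alpha-2)$ --- and hence, after differentiating this identity, equal displacements at the common terminal point --- the contributions downstream of the collision cancel, and what survives is governed by the two path lengths $M$ and $N$. I expect the outcome to be $\operatorname{sign}C'(\alpha)=\operatorname{sign}(N-M)$ up to a fixed convention, so that $C$, and therefore $h$, is \emph{constant} on any matching interval with $M=N$ and \emph{strictly monotone} when $M\ne N$, with the monotonicity of $h$ opposite to that of $C$. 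This already yields the constant pieces ($M=N$) and the monotone pieces ($M\ne N$).

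Finally I would exhibit the four families. For each $n\ge 3$ one locates $\frac1n$ inside a matching interval $I_n$ whose exponents give $h$ increasing, and then, descending from $\frac1n$ toward $\frac1{n+1}$, one produces the adjacent matching intervals $L_n$ (with $M=N$, so $h$ constant), $K_n$ (with exponents giving $h$ decreasing, clustering for $n=3$ around $\frac{14}{47}$ as in Figure~\ref{fig:entzoom2}), and $J_n$ (again $M=N$), verifying conditions (1) and (2): that $\frac1n\in I_n$ and $I_{n+1}<J_n<K_n<L_n<I_n$. Each matching interval is cut out by the rationals at which the endpoint orbits first collide, so its endpoints and its exponents are determined by prescribed odd-continued-fraction expansions. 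The main obstacle is precisely this last, combinatorial, part: one must compute the matching exponents on an entire $n$-indexed family of adjacent matching intervals and verify, \emph{uniformly in} $n$, both that the sign of $N-M$ produces the pattern increasing/constant/decreasing/constant and that the four intervals occur in the stated order and accumulate correctly at $\frac1n$ and $\frac1{n+1}$.
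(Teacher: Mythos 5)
Your proposal reproduces the paper's architecture --- matching intervals on which the sign of $N-M$ dictates whether the entropy is increasing, constant, or decreasing, plus four $n$-indexed families of such intervals --- but it stops exactly where the paper's proof begins. The entire content of the paper's argument is the explicit combinatorial construction you defer as ``the main obstacle'': the sequences $a_n=\frac1n$, $b_n=\frac{2n+1}{2n^2+2n+1}$, $c_n=\frac{5n-1}{n(5n-1)+5}$, $d_n=\frac{n}{n^2+1}$ of (\ref{eq:seq}) with $a_{n+1}<b_n<c_n<d_n<a_n$; the computation of their odd $\alpha$-continued fraction expansions (Table~\ref{tab}) and matching exponents (Proposition~\ref{prop:seq}), giving $N-M<0$ at $a_n$, $N-M=0$ at $b_n$ and $d_n$, and $N-M=3>0$ at $c_n$, uniformly in $n$ and split by parity; the matrix identity $M_{\alpha,\alpha,N}=R^2M_{\alpha,\alpha-2,M}VSR^2S$ of Lemma~\ref{lem:forseq}, verified from the second-to-last convergents in Table~\ref{tabalg}; and the perturbation Lemma~\ref{lem:neighbor}, which upgrades each rational (where both endpoint orbits terminate at $0$) to a genuine matching \emph{interval} around it with shifted exponents $(N+1,M+1)$. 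Nothing in your sketch produces candidate rationals, computes their orbits, or verifies the ordering and the signs of $N-M$; ``one locates $\frac1n$ inside a matching interval whose exponents give $h$ increasing'' is an assertion of the theorem, not a step toward it. Note also the paper's nontrivial parity-sensitive bookkeeping (e.g.\ $T_\alpha^3(\alpha-2)=a_{n-2}-2$ driving the induction, and the verification that the claimed expansion of $b_n-2$ really is the $\alpha$-expansion, i.e.\ that the whole orbit stays in $[\alpha-2,\alpha)$) --- this is where the actual work lies.

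There is a second, analytic, gap. For $n\geq 3$ one has $\frac1n\leq\frac13<\frac{\sqrt{13}-1}{6}$, so every parameter relevant to the theorem lies strictly below the range in which the paper (Theorem~\ref{thm:newNatExt}) constructs $\Omega_\alpha$ and establishes the invariant density $(1+xy)^{-2}$. Your reduction via Rohlin's formula assumes, in this uncharted regime, that the natural extension carries that density, that the numerator $\iint\log x^{-2}\,d\mu$ equals $\pi^2/3$ independently of $\alpha$, and that $\Omega_\alpha$ changes on a matching interval by finitely many analytically moving rectangles --- none of which is proved here, and you yourself concede the domain has no closed-form description. The paper deliberately avoids this route: it never touches $\Omega_\alpha$ for these $\alpha$, but instead derives from (\ref{eq:alg2}) the local identity (\ref{eq:alg1}), $M_{x,x,N+1}=R^2M_{x,x-2,M+1}$, valid for all $x$ in the matching interval, and then invokes the general matching machinery of \cite{[L]}, Section 3 (with $\alpha-1$ replaced by $\alpha-2$), for which (\ref{eq:alg1}) is necessary and sufficient to conclude monotone/constant behaviour of the entropy according to the sign of $N-M$. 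So your telescoping-derivative mechanism would need to be either proved from scratch in this low-$\alpha$ regime or replaced, as in the paper, by the algebraic identity plus the cited general result; as written, both the analytic reduction and the combinatorial core are missing.
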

We will prove this theorem using a series of lemmas with a prominent role reserved for a phenomena called \textit{matching}. We say that matching holds for a parameter $\alpha$ if there are $N,M\in\mathbb{N}$ such that  
\begin{equation}\label{eqi:matching}
T_{\alpha}^N(\alpha)=T_{\alpha}^M(\alpha-2).
\end{equation}
For the families earlier mentioned it is proven that matching holds for almost every parameter, i.e for almost every $\alpha\in(0,1)$ there are $N,M\in\mathbb{N}$ such that $T_{\alpha}^N(\alpha)=T_{\alpha}^M(\alpha-1)$; see~\cite{[CIT],[CLS],[CT]}. Furthermore, the parameter space breaks up into intervals on which the matching exponents (the minimal numbers $N,M$ such that (\ref{eqi:matching}) holds) are constant. We can choose $(I_n),(J_n),(K_n),(L_n)$ as such intervals (they will not cover the parameter space entirely). On such interval, if $N-M<0$ the entropy of $T_\alpha$ is  increasing, if $N-M=0$ the entropy of $T_\alpha$ is constant and if $N-M>0$ the entropy of $T_\alpha$ is decreasing. It turns out that for our family of continued fractions we can also identify matching intervals and this property also holds. The proof is in analogy to the other families (see~\cite{[L]}).
Before we make sequences of intervals we will first make sequences of rational numbers. For these rational numbers we determine the matching exponents $(N,M)$.  Then we will show that for points in a small neighborhood of these rationals we have matching exponents $(N+1,M+1)$. The sequences are chosen in a way such that the neighborhoods will give rise to the desired intervals $(I_n),(J_n),(K_n),(L_n)$.
\begin{proposition}\label{prop:seq}
Let $n\geq 3$ be an integer and define the following sequences:
\begin{equation}\label{eq:seq}
a_n=\frac{1}{n}, \quad b_n=\frac{2n+1}{2n^2+2n+1},\quad  c_n=\frac{5n-1}{n(5n-1)+5}, \quad d_n=\frac{n}{n^2+1}.
\end{equation}
For these sequences we have $a_{n+1}<b_n<c_n<d_n<a_n$. Furthermore, 
\begin{enumerate}[label=(\roman*)]
\item{for $a_n$ we have that the matching exponents $(N,M)$ are \newline $(1,3\frac{(n-1)}{2}+1)$ for $n$ is odd and $(2,3\frac{(n-2)}{2}+2)$ for $n$ is even.}
\item{for $b_n$ we have that the matching exponents $(N,M)$ are \newline $(3\frac{(n+1)}{2}+1,3\frac{(n+1)}{2}+1)$ for $n$ is odd and $(\frac{3n}{2}+1,\frac{3n}{2}+1)$ for $n$ is even.}
\item{for $c_n$ we have that the matching exponents $(N,M)$ are \newline $(3\frac{(n-1)}{2}+6,3\frac{(n-1)}{2}+3)$ for $n$ is odd and $(\frac{3n}{2}+4,\frac{3n}{2}+1)$ for $n$ is even.}
\item{for $d_n$ we have that the matching exponents $(N,M)$ are \newline $(3\frac{(n-1)}{2}+2,3\frac{(n-1)}{2}+2)$ for $n$ is odd and $(\frac{3n}{2},\frac{3n}{2})$ for $n$ is even.}
\end{enumerate}
\end{proposition}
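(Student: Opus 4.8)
The plan is to separate the two assertions of the proposition: the ordering of the four sequences, and the values of the matching exponents. The ordering $a_{n+1}<b_n<c_n<d_n<a_n$ is routine: each inequality, after clearing the (positive, for $n\geq 3$) denominators, reduces to a polynomial inequality in $n$ that is immediate. For example, $d_n<a_n$ is just $n^2<n^2+1$. I would dispose of the four comparisons in one line each.

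The engine for (i)--(iv) is a single \emph{period-three descent lemma}. Writing a point of $I_\alpha$ sitting just to the right of $\alpha-2$ in the form $-2+\tfrac1m$, one checks directly from (\ref{eq:alphamap}) that for $m\geq 3$ (and $\alpha$ small, which holds since here $\alpha$ is comparable to $\tfrac1n$ while $m\leq n$)
\[
T_\alpha\Big(-2+\tfrac1m\Big)=-\tfrac{m-1}{2m-1},\qquad
T_\alpha\Big(-\tfrac{m-1}{2m-1}\Big)=-\tfrac{m-2}{m-1},\qquad
T_\alpha\Big(-\tfrac{m-2}{m-1}\Big)=-2+\tfrac1{m-2},
\]
the three digits read off being $(-1,1),(-1,3),(-1,3)$; the first step uses (\ref{eq:tnstar}). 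Hence $T_\alpha^3(-2+\tfrac1m)=-2+\tfrac1{m-2}$, so such a point descends along a \emph{track} that lowers $m$ by $2$ every three steps and therefore preserves the parity of $m$. I would then record the two finite tails: the odd track ends with $-2+\tfrac11=-1\mapsto 0$, and the even track ends with $-2+\tfrac12=-\tfrac32\mapsto-\tfrac13\mapsto 0$. This single mechanism already explains both the factor $3$ and the parity split that pervade the statement.

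With the descent lemma available, each family is handled by the same recipe: first compute the short \emph{transient} bringing the orbit of $\alpha$, and that of $\alpha-2$ (started from $T_\alpha(\alpha-2)=-\tfrac1{\alpha-2}-1$), onto one of the two tracks, then count steps down to the first coincidence. For $a_n=\tfrac1n$ one finds $T_\alpha(\alpha)=0$ when $n$ is odd and $T_\alpha^2(\alpha)=0$ when $n$ is even, while $\alpha-2=-2+\tfrac1n$ already lies on the track and reaches $0$ after $3\tfrac{n-1}{2}+1$ (odd) or $3\tfrac{n-2}{2}+2$ (even) steps, giving (i). The points $b_n,c_n,d_n$ have slightly longer transients—for instance $T_\alpha^2(d_n)=-2+\tfrac1{n-1}$—after which both orbits run down the appropriate tracks; aligning the entry times and entry indices modulo the period $3$ yields (ii)--(iv), with the sign of $N-M$ dictated by the relative transient lengths ($N=M$ for $b_n,d_n$, and $N-M=3$ for $c_n$).

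The main obstacle is twofold. First, the bookkeeping for $b_n$ and $c_n$ is genuinely heavier, since determining the initial digits requires careful evaluation of the floor in $d_\alpha$ and the transients are long enough that the even/odd cases must be followed with care. Second, and more delicate, is \emph{minimality}: one must verify the two orbits do not coincide before the asserted step. Here the parity-preserving nature of the tracks is decisive—if the two orbits enter tracks of opposite parity they can first meet only at $0$, whereas if they enter the same track one must confirm their entry times and indices are out of phase until precisely the claimed coincidence. Once these phase relations are pinned down, (i)--(iv) follow by a straightforward induction on $n$.
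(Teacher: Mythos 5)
Your ordering argument and your treatment of $a_n$ coincide with the paper's: the paper literally computes $T_\alpha(\alpha-2)=\frac{1-n}{2n-1}$, $T_\alpha^2(\alpha-2)=\frac{2-n}{n-1}$, $T_\alpha^3(\alpha-2)=a_{n-2}-2$, which is exactly your period-three descent with digits $(-1,1),(-1,3),(-1,3)$, and it handles the two parity tails the same way you do. Your explicit attention to minimality via parity of tracks is, if anything, more careful than the paper, which essentially reads the matching exponents off the computed expansions.

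However, there is a genuine gap in your central claim that after a short transient \emph{both} orbits land ``onto one of the two tracks.'' That is true for the orbit of $\alpha$ when $\alpha=b_n$ or $d_n$ (e.g.\ $T_{b_n}^3(b_n)=a_n-2$ exactly), but it is false for the orbits of $\alpha-2$ in all three families $b_n,c_n,d_n$, and also for the orbit of $c_n$ itself. For instance $b_n-2=-2+\frac{2n+1}{2n^2+2n+1}$ is never of the form $-2+\frac1m$, and one checks that $T_\alpha^3\bigl(-2+\frac{c}{D}\bigr)=-2+\frac{c}{D-2c}$ with digits $(-1,1),(-1,3),(-1,3)$; so these orbits run down \emph{perturbed} tracks with numerator $c=2n+1$, $5n-1$, $n$ (and $c=5$ for the orbit of $c_n$), shadowing your $c=1$ track for $\Theta(n)$ steps but never joining it, and departing with a different endgame (tails $1,5,1,n{+}2$, etc.). Your lemma as stated therefore cannot produce (ii)--(iv); you need either the generalized descent just displayed, together with the floor verifications $d_\alpha=1,3,3$ at each stage (these depend on $\alpha$, so the uniformity in $n$ must be checked), or the paper's alternative: \emph{guess} the full symbolic expansion (e.g.\ $[0;(-1,-3,-3)^k,-1,-5,-1,-(2k+3)]$ for $b_n-2$ with $n=2k+1$), verify by the convergent recurrences (\ref{eq:pnrecurrencerelation})--(\ref{eq:qnrecurrencerelation}) that it evaluates to $b_n-2$, and then prove admissibility --- that every tail lies in $[\alpha-2,\alpha)$ --- by the lexicographic first-differing-digit comparison, since digits produced by a formal identity need not be the digits $T_\alpha$ actually generates. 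A minor related slip: $T_\alpha^2(d_n)=-2+\frac1{n-1}$ holds only for $n$ even; for $n$ odd one has $T_\alpha(d_n)=-2+\frac1n$ in one step. Once the perturbed tracks (or the guess-and-verify route) are in place, your counting and phase analysis would indeed yield the stated exponents.
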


\begin{proof}
For $\alpha=a_n$, $n$ odd we have $T_{\alpha}(\alpha)=0$. For $\alpha-2$ we find:
\begin{eqnarray*}
T_\alpha(\alpha-2)&=&\frac{n}{2n-1}-1=\frac{1-n}{2n-1}\\
T_\alpha^2(\alpha-2)&=&\frac{2n-1}{n-1}-3=\frac{2-n}{n-1}\\
T_\alpha^3(\alpha-2)&=&\frac{n-1}{n-2}-3=\frac{5-2n}{n-2}=a_{n-2}-2\\
\end{eqnarray*}
(in sase $n=3$ we set $a_{n-2}=1$). One can check that these three iterates of $\frac{1}{n}-2$ are always in the interval $(\frac{1}{n}-2,0)$.
Now
$$
a_n-2 < a_{n-2}-2 < \cdots < a_3 - 2 < -1 < \frac{-n}{n+1}
$$
so that for $k=0,1,\dots, \frac{n-1}{2}$ the first digit of $a_{n-2k} -2$ is $-1$. Since 
\[
T_{\alpha} (a_3 -2) = \tfrac{3}{5} -1 = -\tfrac{2}{5},
\quad T_{\alpha}^2 (a_3 -2) = \tfrac{5}{2} - 3 = -\tfrac{1}{2}, \quad
T_{\alpha}^3 (a_3 -2) = 2 - 3 = -1 
\]
hold, we see that 
\[
T_\alpha^{3\frac{(n-1)}{2}}(\alpha-2)=-1 \quad \text{and so } \quad T_\alpha^{3\frac{(n-1)}{2}+1}(\alpha-2)=0.
\]
In fact, we just saw that the $\alpha$-expansion of 
$\alpha -2$ is 
$$
\alpha-2=[0;(-1,-3,-3)^{\frac{n-1}{2}},-1]
$$
For $\alpha=a_n$, $n$ even we have $T_{\alpha}(\alpha)=-1$ so $T_{\alpha}^2(\alpha)=0$. For $\alpha-2$ the first iterations will give the same digits as for $n$ is odd, so we again find:
\[
T_\alpha^3(\alpha-2)=a_{n-2}-2.
\]
This gives 
\[
T_\alpha^{3\frac{(n-2)}{2}}(\alpha-2)=a_{2}-2=-\frac{3}{2} \quad \text{ and so } \quad T_\alpha^{3\frac{(n-2)}{2}+2}(\alpha-2)=0.
\]
Note that from this calculation we also found the $\alpha$-continued fraction expansion $$\alpha-2=[0;(-1,-3,-3)^{\frac{n-2}{2}},-1,-3].$$
\vspace{1em}

For $\alpha=b_n$, $n$ odd we have 
\begin{eqnarray*}
T_\alpha(\alpha)&=&\frac{2n^2+2n+1}{2n+1}-(n+2)=\frac{-(3n+1)}{2n+1}\\
T_\alpha^2(\alpha)&=&\frac{2n+1}{3n+1}-1=\frac{-n}{3n+1}\\
T_\alpha^3(\alpha)&=&\frac{3n+1}{n}-5=\frac{1}{n}-2=a_{n}-2.\\
\end{eqnarray*}
Using $b_n<a_n$ and that $T_{a_n}^k(a_n-2)\leq 0$ for all $k\in \{1,\ldots, \frac{3(n-1)}{2}+1\}$ we find that, for these values of $k$, that  $T_{a_n}^k(a_n-2)= T_{b_n}^k(a_n-2)$ and so
$$
b_n=[0;-(n+2),-1,-5,(-1,-3,-3)^{\frac{n-1}{2}},-1].$$ This yields that $T_\alpha^{\frac{3(n+1)}{2}+1}(\alpha)=0$.
\vspace{1em}\newline
For $\alpha-2$ let us write $n=2k+1$ and consider $\beta^\prime=[0;(-1,-3-,3)^k,-1]$. We will show that we can write $\alpha-2$ as $\alpha-2= [0;(-1,-3,-3)^k,-1,\ldots]$.  Using the recurrent formula's (\ref{eq:pnrecurrencerelation}) and (\ref{eq:qnrecurrencerelation}) we find for $0\leq j\leq k$,
\[
\begin{array}{l l}
p_{3j-1}(\beta^\prime)=1-4j & q_{3j-1}(\beta^\prime)=2j\\
p_{3j}(\beta^\prime)= -8j & q_{3j}(\beta^\prime)=4j+1\\
p_{3j+1}(\beta^\prime)=-1-4j & q_{3j+1}(\beta^\prime)=2j+1\\
\end{array}
\]
One can prove this by induction. Let $\beta=[0;(-1,-3,-3)^k,-1,-5,-1,-(2k+3)]$. We will show that $\beta=\alpha-2$. We find that the numerators and denominators of the convergents of $\beta$ satisfy:
\[
\begin{array}{l l}
p_{3k}(\beta)=-8k & q_{3k}(\beta)=4k+1\\
p_{3k+2}(\beta)=-12k-5 & q_{3k+2}(\beta)=6k+4\\
p_{3k+3}(\beta)=-8k-4 & q_{3k+3}(\beta)=4k+3\\
p_{3k+4}(\beta)=-16k^2-20k-7 & q_{3k+4}(\beta)=8k^2+12k+5.\\
\end{array}
\]
This gives
\[
\beta=\frac{-16k^2-20k-7}{8k^2+12k+5}
\]
and by substituting $n=2k+1$ in $b_n-2$, as given in (\ref{eq:seq}), we find
\[
b_n-2=\frac{-16k^2-20k-7}{8k^2+12k+5}.
\]
We conclude that $\beta=b_n-2$ and so $[0;(-1,-3,-3)^k,-1,-5,-1,-(2k+3)]$ is an expansion of $b_n-2$. In order to prove this is the odd $\alpha$-continued fraction expansion of $b_n-2$ for $\alpha=b_n$ we need to show that all points in the orbit of this continued fraction are between $\alpha-2$ and $\alpha$. Since all $\varepsilon_i(\alpha-2)=-1$ we know that any point in the orbit is smaller than $0$. Now note that to find the ordering of two continued fractions $a=[0;-a_1,-a_2,\ldots]$ and  $b=[0;-b_1,-b_2,\ldots]$ we look at the first digit that differs and the ordering of these digits is taken (so if $a_i>b_i$ then $a>b$). By repeatedly deleting the first digit of $[0;(-1,-3,-3)^k,-1,-5,-1,-(2k+3)]$  and observing that the remaining continued fraction is are strictly larger than $[0;(-1,-3,-3)^k,-1,-5,-1,-(2k+3)]$ we conclude that the orbit over iterations of $T_\alpha$ must be larger than $\alpha-2$.  Hence, $[0;(-1,-3,-3)^k,-1,-5,-1,-(2k+3)]$ is the $\alpha$-continued fraction of $b_n-2$. In the same manner one can check the other matching indices.
\end{proof}
Note that we found the continued fraction expansions of $\alpha$ and $\alpha - 2$ as well. When doing the computations for the other sequences this is also the case. Table~\ref{tab} gives all the odd $\alpha$-continued fraction expansions.  The values or $\varepsilon_i(x)$ is not given in the table in order to make the table more readable. We have $\varepsilon_1(\alpha)=1$ and $\varepsilon_i(\alpha)=-1$ for every $i\geq 2$ for every $\alpha$. For $\alpha-2$ we have that $\varepsilon_i(\alpha-2)=-1$ for every $i\in\mathbb{N}$ and every choice of $\alpha$ from the table.
\newline
\begin{table}[h!]
  \centering
\begingroup
\renewcommand*{\arraystretch}{2}
\noindent\begin{tabular}{| c | c | c | c | c | r |}
\hline
$\alpha$  & digits of the odd $\alpha$-CF of $\alpha$ & digits of the odd $\alpha$-CF of $\alpha-2$ &  \\
\hline
$a_n$ & $ n $ & $(1,3,3)^{\frac{n-1}{2}},1$  & odd\\
	  & $n+1,1$ & $(1,3,3)^{\frac{n-2}{2}},1,3$  & even\\
\hline 
$b_n$ & $n+2,1,5,(1,3,3)^{\frac{n-1}{2}},1$ & $(1,3,3)^{\frac{n-1}{2}},1,5,1,n+2$  & odd\\
	  & $n+1,(3,3,1)^{\frac{n}{2}}$ & $(1,3,3)^{\frac{n}{2}},n+1$  & even\\
\hline 
$c_n$ & $n+2,(1,3,3)^{\frac{n-1}{2}},3,1,3,3,1$ & $(1,3,3)^{\frac{n-1}{2}},1,n+2,5$  & odd\\
	  & $n+1,3,(1,3,3)^{\frac{n-2}{2}},3,1,3,3,1$ & $(1,3,3)^{\frac{n-2}{2}},1,3,n+1,5$  & even\\
\hline 
$d_n$ & $n+2,(1,3,3)^{\frac{n-1}{2}},1$ & $(1,3,3)^{\frac{n-1}{2}},1,n+2$  & odd\\
	  & $n+1,3,(1,3,3)^{\frac{n-2}{2}},1$ & $(1,3,3)^{\frac{n-2}{2}},1,3,n+1$  & even\\
\hline
\end{tabular}
\endgroup \caption{The digits of the odd $\alpha$-continued fractions expansions of  $\alpha$ and $\alpha-2$ for $\alpha$ from the sequences of (\ref{eq:seq}).}\label{tab}
\end{table}

\subsection{Algebraic relations and corollaries thereof}
We will now show that certain algebraic relations hold for the rationals in our sequences. 
To find algebraic relations we work with M\"obius transformations and matrices.
Let 
\[
A=\left[
\begin{array}{c c}
 e_1 & e_2\\
 e_3 & e_4\\
\end{array}
\right]
\]
be a matrix with $e_i\in\mathbb{Z}$. 
The M\"obius transformation induced by $A$ is the map $A:\mathbb{R}\rightarrow\mathbb{R}$ given by
\begin{equation}\label{eq:XX}
A(z)=\frac{e_1z+e_2}{e_3z+e_4}.
\end{equation}
\noindent Now let $d\in 2\mathbb{N}-1$ and $\varepsilon=\pm 1$. We define the following matrices in $\text{SL}_2(\mathbb{Z})$
\[
B_{\varepsilon,d}=\left[
\begin{array}{c c}
 0 & \varepsilon\\
 1 & d\\
\end{array}
\right]
, \quad
R=\left[
\begin{array}{ c c }
 1 & 1\\
 0 & 1\\
\end{array}
\right]
, \quad
S=\left[
\begin{array}{ c c }
 0 & 1\\
 1 & 0\\
\end{array}
\right]
, \quad
V=\left[
\begin{array}{ c c }
  -1 & 0\\
   0 & 1\\
\end{array}
\right] .
\] 
Fix $\alpha$ and $x\in [\alpha-2,\alpha)$ and let
\[
M_{\alpha,x,n}=B_{\varepsilon_{\alpha,1}(x),d_{\alpha,1}(x)}B_{\varepsilon_{\alpha,2}(x),d_{\alpha,2}(x)}B_{\varepsilon_{\alpha,3}(x),d_{\alpha,3}(x)}\cdots B_{\varepsilon_{\alpha,n}(x),d_{\alpha,n}(x)}.
\]
One can check shows that 
\begin{equation}\label{eq:XXX}
M_{\alpha,x,n}=\left[
\begin{array}{ c c }
 p_{\alpha,n-1}(x) & p_{\alpha,n}(x)\\
 q_{\alpha,n-1}(x) & q_{\alpha,n}(x)\\
\end{array}
\right]
\end{equation}
\begin{lemma}\label{lem:forseq}
For all $\alpha$ from the sequences of (\ref{eq:seq})  with $(N,M)$ minimal such that $T_\alpha^N(\alpha)=T_\alpha^M(\alpha-2)=0$ we have 
\begin{equation}\label{eq:alg2}
M_{\alpha,\alpha,N}=R^2M_{\alpha,\alpha-2,M}VSR^2S.
\end{equation}
\end{lemma}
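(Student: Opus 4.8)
The plan is to turn the matrix identity (\ref{eq:alg2}) into a statement about the convergents of $\alpha$ and of $\alpha-2$ via (\ref{eq:XXX}), and then to prove it by pinning down the two associated M\"obius transformations at the points $0$ and $\infty$, together with a determinant and a sign. First I would record the two constant matrices on the right of (\ref{eq:alg2}):
\[
R^2=\begin{pmatrix}1&2\\0&1\end{pmatrix},\qquad VSR^2S=\begin{pmatrix}-1&0\\2&1\end{pmatrix},
\]
so that, viewed as M\"obius transformations, $R^2$ is the shift $z\mapsto z+2$ and $VSR^2S$ fixes $0$. Writing $A:=M_{\alpha,\alpha,N}$ and $B:=R^2M_{\alpha,\alpha-2,M}VSR^2S$, both lie in $\mathrm{GL}_2(\mathbb{Z})$, and the goal is $A=B$. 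The key reduction I would use is elementary: if two integer matrices have the same determinant and their M\"obius transformations agree at $0$ and at $\infty$, then $B^{-1}A$ is an integer diagonal matrix of determinant $1$, hence $\pm I$, so $A=\pm B$; a single sign check then finishes.

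Two of these ingredients are essentially free. Since $\det B_{\varepsilon,d}=-\varepsilon$, and the sign sequences recorded after Table~\ref{tab} are $\varepsilon_1(\alpha)=+1$, $\varepsilon_i(\alpha)=-1$ for $i\geq2$, and $\varepsilon_i(\alpha-2)=-1$ for all $i$, one gets $\det A=-1$ and $\det M_{\alpha,\alpha-2,M}=+1$, whence $\det B=\det R^2\cdot\det(VSR^2S)=-1=\det A$. Moreover both maps send $0$ to $\alpha$: because $T_\alpha^N(\alpha)=0$, the $N$-th convergent equals $\alpha$, so $A(0)=p_{\alpha,N}(\alpha)/q_{\alpha,N}(\alpha)=\alpha$; on the right, $VSR^2S$ fixes $0$, the matching $T_\alpha^M(\alpha-2)=0$ gives $M_{\alpha,\alpha-2,M}(0)=\alpha-2$, and $R^2(\alpha-2)=\alpha$.

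The heart of the argument is the agreement at $\infty$. Here $A(\infty)=p_{\alpha,N-1}(\alpha)/q_{\alpha,N-1}(\alpha)$, while $VSR^2S(\infty)=-\tfrac12$, so $A=B$ amounts to the single scalar relation
\[
\frac{p_{\alpha,N-1}(\alpha)}{q_{\alpha,N-1}(\alpha)}=2+M_{\alpha,\alpha-2,M}\Big(-\tfrac12\Big),
\]
which is exactly the statement that the ``pasts'' of the two orbits coincide. This is where the explicit data enters: Table~\ref{tab} lists the finite digit strings of both $\alpha$ and $\alpha-2$ for each of the four families $a_n,b_n,c_n,d_n$ and each parity of $n$, so that $M_{\alpha,\alpha-2,M}$ and the pair $p_{\alpha,N-1},q_{\alpha,N-1}$ are explicit finite products of the matrices $B_{\varepsilon,d}$, computable exactly as the convergents in the proof of Proposition~\ref{prop:seq}. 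I would verify the displayed scalar relation by induction on $n$ in steps of $2$, the inductive step being governed by the repeated block $C:=B_{-1,1}B_{-1,3}B_{-1,3}$ that lengthens as $n$ grows; the conjugation identity $R^2CR^{-2}=SR^2S=B_{1,n}R^2B_{1,n}^{-1}$ makes the recursion telescope, and in fact already yields the full matrix identity directly in the cleanest case ($\alpha=a_n$, $n$ odd, where $A=B_{1,n}$). The base cases $n=3$ (and $n=4$) are checked by a direct multiplication.

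The main obstacle is the uniform bookkeeping in this $\infty$-identity across the four families and the two parities. One must track how the digits bordering the $(1,3,3)$-blocks (the $n+1$, $n+2$, $5$ and $3$ appearing in Table~\ref{tab}) interact with the correction matrices $R^2$ and $VSR^2S$, and confirm in each case that the exponents $(N,M)$ used are indeed the minimal matching exponents recorded in Proposition~\ref{prop:seq}; for $c_n,d_n$ the leading digit of $\alpha$ itself shifts with $n$, so the induction there is more delicate than in the $a_n$ prototype. Once the scalar relation is established, comparing one entry of known sign --- for instance the positive lower-right denominators $q_{\alpha,N}(\alpha)>0$ --- rules out the sign $A=-B$ and gives (\ref{eq:alg2}).
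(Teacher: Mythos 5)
Your proposal is correct, and it organizes the verification differently from the paper. The paper's proof is a direct computation: from the digit strings of $\alpha$ and $\alpha-2$ in Table~\ref{tab} and the recurrences (\ref{eq:pnrecurrencerelation})--(\ref{eq:qnrecurrencerelation}) it computes \emph{all four entries} of $M_{\alpha,\alpha,N}$ and $M_{\alpha,\alpha-2,M}$ for each family and parity (Table~\ref{tabalg}), and then checks (\ref{eq:alg2}) by substituting and multiplying out in each of the six cases. You instead reduce the matrix identity to three cheap checks plus one substantive one: equality of determinants, which is indeed automatic from the sign pattern recorded after Table~\ref{tab} ($\varepsilon_1(\alpha)=+1$, $\varepsilon_{i\geq 2}(\alpha)=-1$, all $\varepsilon_i(\alpha-2)=-1$ give $\det M_{\alpha,\alpha,N}=-1=\det\bigl(R^2M_{\alpha,\alpha-2,M}VSR^2S\bigr)$); agreement of the two M\"obius maps at $0$, which is automatic from the matching condition $T_\alpha^N(\alpha)=T_\alpha^M(\alpha-2)=0$ together with the facts that $VSR^2S=\left(\begin{smallmatrix}-1&0\\2&1\end{smallmatrix}\right)$ fixes $0$ and $R^2$ is the shift by $2$; the sign, which your suggested comparison of the $(2,2)$-entries settles since both equal $q_{\alpha,N}(\alpha)=q_{\alpha,M}(\alpha-2)=q>0$; and the single scalar relation at $\infty$, which is exactly the statement that the second-to-last convergents (the ``pasts'') of the two orbits are compatible --- the only place the explicit expansions of Table~\ref{tab} genuinely enter. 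Your conjugation identity is correct (with $C=B_{-1,1}B_{-1,3}B_{-1,3}=\left(\begin{smallmatrix}-3&-8\\2&5\end{smallmatrix}\right)$ one has $R^2CR^{-2}=SR^2S=B_{1,n}R^2B_{1,n}^{-1}=\left(\begin{smallmatrix}1&0\\2&1\end{smallmatrix}\right)$), and it does telescope as you claim: for $\alpha=a_n$, $n$ odd, $R^2C^{(n-1)/2}B_{-1,1}VSR^2S=(SR^2S)^{(n-1)/2}\left(\begin{smallmatrix}0&1\\1&1\end{smallmatrix}\right)=B_{1,n}$, recovering (\ref{eq:alg2}) in closed form rather than by entrywise inspection. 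What each approach buys: yours isolates the one nontrivial identity per case and makes transparent \emph{why} (\ref{eq:alg2}) encodes matching, at the cost of the bookkeeping you honestly flag for the boundary digits $n+1,n+2,5,3$ in the $b_n,c_n,d_n$ families; the paper's brute-force tabulation is conceptually blunter but mechanically uniform across all six cases. Note that both arguments rest equally on the validity of the expansions in Table~\ref{tab} (established, partly by induction, in Proposition~\ref{prop:seq}), and both leave a comparable residue of routine case verification to the reader, so your plan is a legitimate complete-able proof and not a gap.
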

Equation (\ref{eq:alg2}) will turn out to be very helpful later. For Nakada's $\alpha$-continued fractions a similar equation holds for rational points that are of importance in that setting. In that case the equation is given by $M_{\alpha,\alpha,N}=RM_{\alpha,\alpha-2,M}VSRS$; see \cite{[CT]}. Note that $R(x)=x+1$ and that in our case the endpoints of the interval on which we defined $T_\alpha$ are $\alpha$ and $\alpha-2$ instead of $\alpha$ and $\alpha-1$.
\begin{proof}
Now let $\alpha$ be from the sequences of (\ref{eq:seq}) with $(N,M)$ minimal such that $T_\alpha^N(\alpha)=T_\alpha^M(\alpha-2)=0$. From Table~\ref{tab} one can observe that these exponents are the matching exponents. Furthermore, we have $\frac{p}{q}=\frac{p_{\alpha,N}(\alpha)}{q_{\alpha,N}(\alpha)}$ and $\frac{p}{q}-2=\frac{p_{\alpha,M}(\alpha-2)}{q_{\alpha,M}(\alpha-2)} $.
This gives
\[
M_{\alpha,\alpha,N}= 
\left[
 \begin{array}{ c c }
  p_{\alpha,N-1}(\alpha) & p\\
  q_{\alpha,N-1}(\alpha) & q\\
 \end{array}
 \right]
\quad
M_{\alpha,\alpha-2,M}=
\left[
 \begin{array}{ c c }
   p_{\alpha,M-1}(\alpha-2) & p-2q\\
   q_{\alpha,M-1}(\alpha-2) & q\\
 \end{array}
 \right].
\] 
To be able to verify (\ref{eq:alg2}) we need the second to last convergent of $\alpha$ and $\alpha-2$. By using the continued fractions in Table~\ref{tab} and the recurrence relations (\ref{eq:pnrecurrencerelation}) and (\ref{eq:qnrecurrencerelation}) we can compute these second to last convergent of $\alpha$ and $\alpha-2$. This results in Table~\ref{tabalg}.

\begin{table}[h!]
  \centering
\begingroup
\renewcommand*{\arraystretch}{1.5}

\noindent\begin{tabular}{| c  c  c  r |}
\hline
$\alpha$ & $M_{\alpha,\alpha,N}$ & $M_{\alpha,\alpha-2,M}$ &   \\
\hline
$a_n$ & 
$\left[
 \begin{array}{ c c }
  0 & 1\\
 1 & n\\
 \end{array}
 \right]$ &
$\left[
 \begin{array}{ c c }
   4-4n & 1-2n\\
   2n-1 & n\\
 \end{array}
 \right] $ & odd\\[1.5em]
	  & $\left[
 \begin{array}{ c c }
  1 & 1\\
  n+1 & n\\
 \end{array}
 \right]$ &
 $\left[
 \begin{array}{ c c }
   3-2n & 1-2n\\
   n-1 & n\\
 \end{array}
 \right] $  & even\\[1.5em]
$b_n$ & $\left[
 \begin{array}{ c c }
  4n & 2n+1\\
  4n^2+2n+1 & 2n^2+2n+1\\
 \end{array}
 \right]$  & 
 $\left[
 \begin{array}{ c c }
   -4n & -4n^2-2n-1\\
  2n+1 & 2n^2+2n+1\\
 \end{array}
 \right] $ & odd and even\\[1.5em]
$c_n$ & $\left[
 \begin{array}{ c c }
  9n-2 & 5n-1\\
9n^2-2n+9 & n(5n-1)+5\\
 \end{array}
 \right]$  & 
 $\left[
 \begin{array}{ c c }
-2n^2+n-2 & 7n-10n^2-11\\
n^2+1 & n(5n-1)+5\\
 \end{array}
 \right] $ & odd and even\\[1.5em]
$d_n$ & $\left[
 \begin{array}{ c c }
  2n-1 & n\\
  2n^2-n+2 & n^2+1\\
 \end{array}
 \right]$  & 
 $\left[
 \begin{array}{ c c }
   1-2n & n-2n^2-2\\
  n & n^2+1\\
 \end{array}
 \right] $ & odd and even\\[1.5em]
\hline
\end{tabular}
\endgroup \caption{$M_{\alpha,\alpha,N}$ and $M_{\alpha,\alpha-2,M}$ for the sequences from (\ref{eq:seq}). These are found by calculating the convergents using Table~\ref{tab}.}\label{tabalg}
\end{table}
By substituting these matrices in (\ref{eq:alg2}) we find that the equation indeed  holds for any $\alpha$ from the sequences of (\ref{eq:seq}).
\end{proof}
\begin{lemma}\label{lem:neighbor}
If (\ref{eq:alg2}) holds for $\alpha$ and $(N,M)$ are the matching exponents then we have matching in a sufficiently small neighborhood of $\alpha$ with matching exponents $(N+1,M+1)$. Furthermore, let $x$ be a point in such neighborhood. Then
\begin{equation}\label{eq:alg1}
M_{x,x,N+1}=R^2M_{x,x-2,M+1}
\end{equation} 
holds for $x$.
\end{lemma}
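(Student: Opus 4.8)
The plan is to deduce the matrix identity (\ref{eq:alg1}) from the surviving identity (\ref{eq:alg2}) by appending exactly one digit to each of the two expansions and checking a single $2\times 2$ relation. The structural fact I would lean on is that, by (\ref{eq:XXX}), the matrix $M_{\alpha,x,n}$ is \emph{literally} the product $B_{\varepsilon_1,a_1}\cdots B_{\varepsilon_n,a_n}$ of the digit matrices, so it depends on the pair $(\alpha,x)$ only through the first $n$ digits of the $T_\alpha$-expansion of $x$.

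\textbf{Step 1 (digit stability; the main obstacle).} Since $T_\alpha^N(\alpha)=0$ and $T_\alpha^M(\alpha-2)=0$, the orbits $\{T_\alpha^i(\alpha)\}_{i<N}$ and $\{T_\alpha^j(\alpha-2)\}_{j<M}$ are finite, and each of these finitely many points lies in the interior of its digit cylinder (indeed the terminal point $\varepsilon/a$ maps to the interior point $0\in(\alpha-2,\alpha)$). The parameter $x$ enters $T_x$ both through the endpoints of $I_x=[x-2,x)$ and through the threshold $\tfrac{1-x}{2}$ in $d_x$, but both the orbit points and the cylinder boundaries vary continuously with $x$; hence for $x$ close enough to $\alpha$ every one of these orbit points stays in the same cylinder. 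Making this perturbation argument rigorous in the presence of the moving domain and moving threshold is the delicate point, and is the analogue of the standard argument in \cite{[CT],[L]}. Granting it, the first $N$ digits of $x$ and the first $M$ digits of $x-2$ (now with respect to $T_x$) agree with those of $\alpha$ and $\alpha-2$, so $M_{x,x,N}=M_{\alpha,\alpha,N}$ and $M_{x,x-2,M}=M_{\alpha,\alpha-2,M}$ as integer matrices, and (\ref{eq:alg2}) persists verbatim:
\[
M_{x,x,N}=R^2\,M_{x,x-2,M}\,VSR^2S .
\]

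\textbf{Step 2 (the two tails are $VSR^2S$-related).} Writing $u:=T_x^N(x)$ and $w:=T_x^M(x-2)$, the fundamental identity $y=M_{x,y,n}\big(T_x^n(y)\big)$ coming from (\ref{eq:alphaexpansionofx}) gives $M_{x,x,N}(u)=x$ and $M_{x,x-2,M}(w)=x-2$. Applying the displayed relation to $u$, and using $R^2(z)=z+2$ together with $VSR^2S(z)=\tfrac{-z}{2z+1}$, I obtain $M_{x,x-2,M}\big(\tfrac{-u}{2u+1}\big)=x-2=M_{x,x-2,M}(w)$; since $\det M_{x,x-2,M}=\pm1$, the Möbius map is injective and so $w=\tfrac{-u}{2u+1}$. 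Moreover $u=0$ forces $x=M_{x,x,N}(0)=p_N/q_N=\alpha$, so for $x\neq\alpha$ in the neighborhood both $u,w$ are small, nonzero, and of opposite sign.

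\textbf{Step 3 (one extra digit, and conclusion).} From $w=\tfrac{-u}{2u+1}$ one has $\tfrac{1}{2|w|}=\tfrac{1}{2|u|}+1$, whence $\varepsilon(w)=-\varepsilon(u)$ and, directly from the floor formula for $d_x$, $d_x(w)=d_x(u)+2\varepsilon(u)$. A one-line computation with $T_x(y)=\varepsilon(y)/y-d_x(y)$ then gives $T_x(w)=T_x(u)$, i.e. $T_x^{N+1}(x)=T_x^{M+1}(x-2)$, which is matching with exponents $(N+1,M+1)$. Finally, the $(N+1)$-st digit of $x$ is $(\varepsilon(u),d_x(u))$ and the $(M+1)$-st digit of $x-2$ is $(-\varepsilon(u),\,d_x(u)+2\varepsilon(u))$, and the elementary identity
\[
VSR^2S\,B_{\varepsilon,d}=\begin{pmatrix}-1&0\\2&1\end{pmatrix}\begin{pmatrix}0&\varepsilon\\1&d\end{pmatrix}=\begin{pmatrix}0&-\varepsilon\\1&d+2\varepsilon\end{pmatrix}=B_{-\varepsilon,\,d+2\varepsilon}
\]
shows $VSR^2S\,B_{\varepsilon_{N+1},a_{N+1}}=B_{\varepsilon'_{M+1},a'_{M+1}}$. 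Right-multiplying the relation of Step 1 by $B_{\varepsilon_{N+1},a_{N+1}}$ and using this identity yields $M_{x,x,N+1}=R^2M_{x,x-2,M+1}$, which is (\ref{eq:alg1}). Thus the only genuinely nontrivial ingredient is the digit-stability claim of Step 1; everything after it is a short Möbius/matrix computation.
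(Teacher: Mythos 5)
Your proposal is correct, and its first two steps coincide with the paper's proof: the paper likewise simply asserts the digit-stability claim you flag as the delicate point (``Since $x$ is in a sufficiently small neighborhood of $\alpha$ we have $M_{x,x,N}=M_{\alpha,\alpha,N}$ and $M_{x,x-2,M}=M_{\alpha,\alpha-2,M}$''), and then derives, exactly as you do, the tail relation $T_x^M(x-2)=\frac{-T_x^N(x)}{2T_x^N(x)+1}$ from (\ref{eq:alg2}). Where you genuinely diverge is the finish. The paper never computes the new digits: it forms the difference $T_x^{M+1}(x-2)-T_x^{N+1}(x)$, shows after a case split on the sign of $T_x^N(x)$ that it equals $d_{x,N+1}(x)-d_{x,M+1}(x-2)\in 2\mathbb{Z}$ (oddness of all partial quotients), and concludes it is $0$ because both iterates lie in $[x-2,x)$, an interval of length $2$; it then deduces (\ref{eq:alg1}) rather tersely from $x=M_{x,x,N+1}(T_x^{N+1}(x))=R^2M_{x,x-2,M+1}(T_x^{M+1}(x-2))$, which read literally only gives agreement of two M\"obius maps at a single point. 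Your route instead pins down the $(M+1)$-st digit pair explicitly, $(\varepsilon,d)\mapsto(-\varepsilon,d+2\varepsilon)$, and exploits the identity $VSR^2S\,B_{\varepsilon,d}=B_{-\varepsilon,\,d+2\varepsilon}$, so that one right-multiplication yields matching \emph{and} the matrix identity (\ref{eq:alg1}) simultaneously; this is arguably cleaner and repairs the terseness of the paper's last line. One small slip to fix: your displayed relation $\frac{1}{2|w|}=\frac{1}{2|u|}+1$ holds only when $u>0$; for $u<0$ one computes $\frac{1}{2|w|}=\frac{1}{2|u|}-1$. This does not damage the argument, since in both cases $\varepsilon(w)=-\varepsilon(u)$ and the floor formula for $d_x$ then gives precisely your claimed $d_x(w)=d_x(u)+2\varepsilon(u)$ (one should also remark that $u$ is close to $0$, so $d_x(u)$ is large and $d_x(u)-2$ remains a positive odd integer), after which $T_x(w)=T_x(u)$ follows as you compute. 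Neither you nor the paper verifies minimality of the exponents $(N+1,M+1)$, so you are not below the paper's standard of rigor there.
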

\begin{proof}
Fix $\alpha\in\mathbb{Q}\cap(0,1)$ such that (\ref{eq:alg2}) holds for $\alpha$ with matching exponents $(N,M)$ and $x$ in a sufficiently small neighborhood of $\alpha$. Using (\ref{eq:XX}) and (\ref{eq:XXX}) we can write $x$ as 
\begin{equation}\label{eq:writx1}
x=M_{x,x,N}(T_x^N(x))
\end{equation}
and
\begin{equation}\label{eq:writx2}
x=R^2M_{x,x-2,M}(T_x^M(x-2)) .
\end{equation}
Since $x$ is in a sufficiently small neighborhood of $\alpha$ we have that 
\[
M_{x,x,N}=M_{\alpha,\alpha,N} \text{\  and \ } M_{x,x-2,M}=M_{\alpha,\alpha-2,M} .
\]
Equations (\ref{eq:alg2}), (\ref{eq:writx1}) and (\ref{eq:writx2}) give 
\[
VSR^2ST_x^N(x)=
\left[
 \begin{array}{ c c }
  -1 & 0\\
  2 & 1\\
\end{array}
 \right] \left(T_x^N(x)\right) =T_x^M(x-2)
\]
and so
\begin{equation}\label{eq:TN}
T_x^M(x-2)=\frac{-T_x^N(x)}{2T_x^N(x)+1}
\end{equation}
which implies
\[
T_x^N(x)=\frac{-T_x^M(x-2)}{2T_x^M(x-2)+1} .
\]
Now
\begin{eqnarray*}
T_x^{M+1}(x-2)&=&\left|\frac{2T_x^N(x)+1}{-T_x^N(x)}\right|-d_{x,M+1}(x-2),\\
T_x^{N+1}(x)&=&\left|\frac{2T_x^M(x-2)+1}{-T_x^M(x-2)}\right|-d_{x,N+1}(x).\\
\end{eqnarray*}
This gives
\[
T_x^{M+1}(x-2)-T_x^{N+1}(x)=\left|\frac{-1}{T_x^N(x)}-2\right|-\left|\frac{-1}{T_x^M(x-2)}-2\right|+d_{x,N+1}(x)-d_{x,M+1}(x-2).
\]
Using (\ref{eq:TN}) we find
\[
T_x^{M+1}(x-2)-T_x^{N+1}(x)=\left|\frac{-1}{T_x^N(x)}-2\right|-\left|\frac{1}{T_x^N(x)}\right|+d_{x,N+1}(x)-d_{x,M+1}(x-2).
\]
When $T_x^N(x)>0$ we find
\[
T_x^{M+1}(x-2)-T_x^{N+1}(x)=\frac{1}{T_x^N(x)}+2-\frac{1}{T_x^N(x)}+d_{x,N+1}(x)-d_{x,M+1}(x-2).
\]
When $T_x^N(x)<0$, since $x$ is in a neighborhood of $\alpha$,  $T_x^N(x)$ is in the neighborhood of $0$ and so
\[
T_x^{M+1}(x-2)-T_x^{N+1}(x)=-\frac{1}{T_x^N(x)}-2+\frac{1}{T_x^N(x)}+d_{x,N+1}(x)-d_{x,M+1}(x-2).
\]
In both cases due to the fact that the digits are odd we have 
\[
T_x^{N+1}(x)-T_x^{M+1}(x-1)=r 
\]
for some $r\in2\mathbb{Z}$. Since $T_x^{N+1}(x), T_x^{M+1}(x-2) \in [x-2,x)$ we find $r\in 2\mathbb{Z} \cap (-2,2) =\{0\}$ and therefore we have $T_x^{N+1}(x)=T_x^{M+1}(x-1)$.
Furthermore, since $x=M_{x,x,N+1}(T_x^{N+1}(x))$ and $x=R^2M_{x,x-2,M+1}(T_x^{M+1}(x-2))$ we have that (\ref{eq:alg1}) holds.
\end{proof} 

\begin{proof}[Proof of Theorem \ref{theorem}] Note that by using Lemma~\ref{lem:forseq} and \ref{lem:neighbor}  we found a matching interval for every $\alpha$ from the sequences of (\ref{eq:seq}) for which equation (\ref{eq:alg1}) hold for all elements in this interval. This equation is necessary and sufficient to prove that on such interval, if $N-M<0$ the entropy is increasing, if $N-M=0$ the entropy is constant and if $N-M>0$ the entropy is decreasing. The proof is the same as for the other families mentioned with the only adaptation that $\alpha-1$ is replaced by $\alpha-2$; see \cite{[L]}, Section 3. Furthermore, from Proposition \ref{prop:seq} we find that for $\alpha=a_n$ we have $N-M<0$, for $b_n$ and $d_n$ we have $N-M=0$ and for $c_n$ we have $N-M>0$. This finishes the proof.
\end{proof}
An interesting observation is that, in order to prove the same statement for Nakada's $\alpha$-continued fractions, one can use the same sequences. Though one can check that we do not have the same intervals of monotonicity of the entropy for both families. Note that, in this paper, we proved matching holds for a countable set of intervals. These intervals do not cover the entire parameter space. Two large matching intervals that do cover a large proportion are $(g,1)$ and $(1,G)$. 
For the families Nakada's $\alpha$-continued fractions, Ito and Tanaka's $\alpha$-continued fractions and Katok and Ugarcovici's $\alpha$-continued fractions~\cite{[KUa],[KUb]} it is proven that matching holds almost everywhere (see \cite{[CIT],[CLS],[CT]}). Since the new family behaves in a similar way concerning matching we conjecture the following:
\begin{conjecture}
Matching also holds for Lebesgue almost every $\alpha\in[0,g]$. 
\end{conjecture}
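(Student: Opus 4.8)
The plan is to adapt the now-standard strategy for proving almost-everywhere matching that was carried out for Nakada's, Ito--Tanaka's and Katok--Ugarcovici's families in~\cite{[CIT],[CLS],[CT]} and surveyed in~\cite{[L]}, using the algebraic framework of Lemmas~\ref{lem:forseq} and~\ref{lem:neighbor} that we have already set up for the odd family. The governing objects are the two \emph{critical orbits} $(T_\alpha^k(\alpha))_{k\geq 0}$ and $(T_\alpha^k(\alpha-2))_{k\geq 0}$, i.e. the forward orbits of the endpoints of $I_\alpha=[\alpha-2,\alpha)$; matching is exactly the statement that these two orbits eventually coincide. Writing $\mathcal M\subset[0,g]$ for the matching set, the first task is to show that $\mathcal M$ is open, hence a countable union of maximal \emph{matching intervals}. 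This follows from a stability argument of the type in Lemma~\ref{lem:neighbor}: as long as the two critical orbits reach their common value $T_\alpha^N(\alpha)=T_\alpha^M(\alpha-2)$ before either orbit meets a digit-boundary of $T_\alpha$, the digits $d_\alpha(\cdot)$ are locally constant in $\alpha$, so the relation $M_{\alpha,\alpha,N}=R^2M_{\alpha,\alpha-2,M}VSR^2S$ persists on a neighborhood and matching holds there, with the same exponents in the interior of a matching interval.

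The real content is to prove that the complement $\mathcal E=[0,g]\setminus\mathcal M$ is Lebesgue-null. I would first identify $\mathcal E$ with a \emph{bifurcation set}: $\alpha\in\mathcal E$ forces one of the two critical orbits to avoid, for all time, the cylinders in which matching is triggered. The key analytic input, borrowed from~\cite{[NN]}, is that the parameter maps $\alpha\mapsto T_\alpha^k(\alpha)$ and $\alpha\mapsto T_\alpha^k(\alpha-2)$ are piecewise Möbius in $\alpha$ with uniformly expanding, non-vanishing derivative. This strong expansion in parameter space lets one transfer the a.e.\ behaviour of the fibre dynamics $T_\alpha$ --- which is ergodic for every $\alpha\in[\tfrac{\sqrt{13}-1}{6},G]$ by the isomorphism of Theorem~\ref{thm:newNatExt}, and is expected to remain ergodic with an absolutely continuous invariant measure for a.e.\ smaller $\alpha$ --- to the parameter itself. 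Concretely, a density-point / Borel--Cantelli argument along the monotonicity pieces should show that the set of $\alpha$ whose critical orbit systematically misses the matching cylinders is covered by families of intervals of total length tending to $0$, giving $\mathrm{Leb}(\mathcal E)=0$.

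A second, more combinatorial route --- the one that dovetails best with our explicit computations --- is to exploit the renormalization structure visible in Proposition~\ref{prop:seq}. The identity $T_{a_n}^3(a_n-2)=a_{n-2}-2$ and the recurring block $(1,3,3)$ strongly suggest a renormalization operator $\mathcal R$ on a sub-interval of parameter space under which matching at $\alpha$ corresponds to matching at $\mathcal R\alpha$ with exponents decreased by a fixed amount. If one can show that $\mathcal R$ is uniformly expanding and that $\mathcal E$ is $\mathcal R$-invariant, then $\mathcal E$ is the repeller of an expanding Markov system and automatically has measure zero (and, as for the bifurcation set of the Gauss map in~\cite{[CT]}, one would expect Hausdorff dimension strictly below $1$). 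The matching intervals $\bigcup_n(I_n\cup J_n\cup K_n\cup L_n)$ produced in Theorem~\ref{theorem}, together with the two large intervals $(g,1)$ and $(1,G)$, would then be organised by $\mathcal R$ into a self-similar cover of full measure.

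The step I expect to be the genuine obstacle is establishing the measure-zero bound for $\mathcal E$ \emph{uniformly down to $\alpha=0$}. Near $0$ not all branches of $T_\alpha$ are expansive (a point the introduction already flags), so the clean lower bound on $|\frac{d}{d\alpha}T_\alpha^k(\cdot)|$ can degrade, and the invariant densities --- known explicitly only on $[\tfrac{\sqrt{13}-1}{6},G]$ --- must first be controlled for small $\alpha$. Handling the accumulation of matching intervals at $0$ (where, by Theorem~\ref{theorem}, $I_n,J_n,K_n,L_n$ cluster at each $\tfrac{1}{n}$) and ruling out a positive-measure Cantor-like residual set in every neighborhood of $0$ is precisely where the argument for the earlier families would need the most careful adaptation.
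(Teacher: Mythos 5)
You should first note that the statement you were asked to prove is stated in the paper as a \emph{conjecture}: the authors explicitly write that they were unable to prove it and that ``a more precise description of the set of $\alpha$ for which matching does not hold might be needed.'' So there is no paper proof to compare against, and the real question is whether your proposal closes the gap. It does not: it is a reasonable research program modelled on \cite{[CIT],[CLS],[CT]}, but every step carrying the measure-theoretic weight is asserted rather than proved. Most concretely, your opening claim that the matching set $\mathcal{M}$ is open ``by a stability argument of the type in Lemma~\ref{lem:neighbor}'' inverts the logic of that lemma: Lemma~\ref{lem:neighbor} takes the algebraic relation (\ref{eq:alg2}) as a \emph{hypothesis}, and Lemma~\ref{lem:forseq} verifies (\ref{eq:alg2}) only for the four explicit sequences of (\ref{eq:seq}), by direct matrix computation from Table~\ref{tabalg}. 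For an arbitrary parameter with $T_\alpha^N(\alpha)=T_\alpha^M(\alpha-2)$, the two sides are distinct piecewise-M\"obius functions of $\alpha$, and an equality at a single point need not persist under perturbation; what makes it persist is precisely an identity of type (\ref{eq:alg2}), and showing that \emph{every} matching parameter satisfies such an identity is itself a substantive theorem in the Nakada setting (it is the heart of \cite{[CT]}), for which no analogue is established here.

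The second and larger gap is the null-measure bound for the bifurcation set $\mathcal{E}$, where both of your routes stop at the point of difficulty. The ``key analytic input'' you propose to borrow from \cite{[NN]} --- uniformly expanding, non-vanishing derivative of $\alpha\mapsto T_\alpha^k(\alpha)$ --- is exactly what is unavailable here: as the introduction of the paper notes, for $\alpha\leq 1$ not all branches of $T_\alpha$ are expansive, and the invariant densities are known only on $\bigl[\tfrac{\sqrt{13}-1}{6},G\bigr]$ (Theorem~\ref{thm:newNatExt} says nothing below that threshold, and ergodicity with an a.c.i.m.\ for a.e.\ small $\alpha$ is itself only ``expected'' in your text). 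Without these inputs the Borel--Cantelli/density-point step has no distortion or expansion estimates to run on. The renormalization route fares no better: the operator $\mathcal{R}$ is never defined, the identity $T_{a_n}^3(a_n-2)=a_{n-2}-2$ is verified only along the sequence $a_n$ and only for one of the two critical orbits, and both uniform expansion of $\mathcal{R}$ and $\mathcal{R}$-invariance of $\mathcal{E}$ are further conjectures. Your own closing paragraph concedes that the behaviour near $\alpha=0$ is ``the genuine obstacle'' --- which is an accurate self-assessment, and it is the same obstacle the authors identify; the proposal therefore leaves the conjecture exactly as open as the paper does.
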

Even though it would surprise us if the following conjecture would be false, we were unable to prove it at this point.
In order to prove this a more precise description of the set of $\alpha$ for which matching does not hold might be needed.

\end{document}